\documentclass[11pt,reqno,a4paper]{amsart}

\usepackage{amsmath,amsfonts,amssymb,amsthm,mathrsfs}
\usepackage{rotating,pgf,tikz,float}
\usepackage{graphicx,color}
\usetikzlibrary{automata,positioning,arrows.meta,shapes.misc}


\usepackage[hmargin=3.0cm,bmargin=2.9cm,tmargin=3.4cm]{geometry}

\newtheorem{theorem}{Theorem}[section]
\newtheorem{lemma}[theorem]{Lemma}
\newtheorem{proposition}[theorem]{Proposition}

\newtheorem*{conjecture*}{Conjecture}

\theoremstyle{definition}
\newtheorem{definition}[theorem]{Definition}

\theoremstyle{remark}
\newtheorem{remark}[theorem]{Remark}

\numberwithin{equation}{section}
\numberwithin{figure}{section}

\newcommand{\R}{\mathbb{R}}
\newcommand{\N}{\mathbb{N}}

\newcommand{\dx}{\; {\rm d}x}

\renewcommand{\d}{{\rm d}}

\DeclareMathOperator{\dist}{dist}
\DeclareMathOperator{\supp}{supp}

\DeclareMathOperator{\interior}{int}
\DeclareMathOperator{\EffDeg}{eff\,deg}

\newcommand{\partition}{\mathcal{P}}

\newcommand{\energyc}{\Lambda^C}

\newcommand{\cheeger[1]}[]{h_{#1}}

\newcommand{\perim}{{\rm Per}_\Gamma}

\newcommand{\dirichletlambda}{\lambda_1^D}
\newcommand{\energyd}{\Lambda^D}

\newcommand{\VertexSet}{\mathcal{V}}
\newcommand{\EdgeSet}{\mathcal{E}}
\newcommand{\BV}{{\rm BV}}


\title{Cheeger cuts and Robin spectral minimal partitions of metric graphs}

\subjclass[2020]{34B45, 35P15, 49Q10, 81Q35}

\keywords{Metric graph, quantum graph, Cheeger cut, Cheeger constant, spectral minimal partition, Robin Laplacian, delta vertex condition}

\author{James B.~Kennedy}
\author{Jo\~ao P.~Ribeiro}

\address{Departamento de Matem\'atica, Faculdade de Ci\^encias da Universidade de Lisboa,
Campo Grande, Edif\'icio C6, 1749-016 Lisboa, Portugal {\rm and} Grupo de F\'isica Matem\'atica, Instituto Superior T\'ecnico, Av.\ Rovisco Pais, 1049-001 Lisboa, Portugal}
\email{jbkennedy@ciencias.ulisboa.pt}
\address{Departamento de Matem\'atica, Faculdade de Ci\^encias da Universidade de Lisboa,
Campo Grande, Edif\'icio C6, 1749-016 Lisboa, Portugal}
\email{jpgribeiro@ciencias.ulisboa.pt}


\thanks{The authors would like to thank Matthias Hofmann for helpful discussions about optimal Cheeger partitions, as well as the anonymous referee for a very careful and thorough reading of the first draft of this manuscript, and a number of constructive comments and suggestions. Both authors were supported by the Funda\c{c}\~ao para a Ci\^encia e a Tecnologia, Portugal, within the scope of the project NoDES: Nonlinear Dispersive and Elliptic Systems - new horizons in regularity, dynamics and asymptotic analysis, reference PTDC/MAT-PUR/1788/2020, and via grant UIDB/00208/2020.}

\begin{document}

\begin{abstract}
We study partition problems based on two ostensibly different kinds of energy functionals defined on $k$-partitions of metric graphs: Cheeger-type functionals whose minimisers are the $k$-Cheeger cuts of the graph, and the corresponding values are the $k$-Cheeger constants of the graph; and functionals built using the first eigenvalue of the Laplacian with positive, i.e. absorbing, Robin (delta) vertex conditions at the boundary of the partition elements. We prove existence of minimising $k$-partitions, $k \geq 2$, for both these functionals. We also show that, for each $k \geq 2$, as the Robin parameter $\alpha \to 0$, up to a renormalisation the spectral minimal Robin energy converges to the $k$-Cheeger constant. Moreover, up to a subsequence, the Robin spectral minimal $k$-partitions converge in a natural sense to a $k$-Cheeger cut of the graph. Finally, we show that as $\alpha \to \infty$ there is convergence in a similar sense to the corresponding Dirichlet minimal energy and partitions.

It is strongly expected that similar results hold on general (smooth, bounded) Euclidean domains and manifolds.
\end{abstract}

\maketitle

\section{Introduction}
\label{sec:intro}

We are interested in certain kinds of geometric and analytic ways of partitioning an object such as a domain or a graph, and the relation between them. Given a domain $\Omega \subset \R^d$ and $k \geq 2$, consider the problem of finding a partition of $\Omega$ into $k$ nonempty disjoint subdomains $\Omega_1, \ldots, \Omega_k$ in such a way that
\begin{equation}
\label{eq:cheeger-2}
    \max \left\{\frac{|\partial\Omega_1|}{|\Omega_1|},\ldots,\frac{|\partial\Omega_k|}{|\Omega_k|} \right\}
\end{equation}
is minimised, that is, such that the maximum of the respective \emph{isoperimetric ratios} of the subdomains is as small as possible. Under reasonable conditions, when $k=2$ this minimal value corresponds to the \emph{Cheeger constant} of $\Omega$ as introduced in \cite{Ch70} for manifolds. We will refer to the problem of finding a $k$-partition of $\Omega$ which minimises \eqref{eq:cheeger-2}, as the \emph{Cheeger problem} (or \emph{$k$-Cheeger problem}) for $\Omega$, the corresponding minimal value as its $k$-Cheeger constant, and any corresponding minimal partition as a $k$-Cheeger cut. Such problems have been studied exhaustively in many different contexts, including domains and (discrete) graphs, both for their links to Laplacian eigenvalues and for their intrinsic geometric interpretation and applications; see, e.g., \cite{BoPa18,BrTaChSz14,Ch05,Pa11}, as well as Section~\ref{sec:cheeger} below.

The Cheeger problem is in general analytically and computationally difficult; the associated functionals are defined on BV spaces rather than $L^p$ spaces, and it is well known that the problem on discrete graphs, where it finds applications in clustering, is NP hard (e.g., \cite{BrTaChSz14}).

One approach is to approximate the Cheeger cut via the (in some senses singular) limit, as $p\to 1$, of partitions based on eigenfunctions of the $p$-Laplacian with Dirichlet boundary conditions (e.g., \cite[Section~7]{Pa11}; see also \cite{BoBuFr20} for another approach). Here, however, we will propose and study another class of Laplacian-based \emph{spectral minimal partition} problems which, as we will show, also give access to the Cheeger problem, and which we believe to be interesting in their own right. For a number of reasons, as we will explain below, in the current work we will study these problems in the context of \emph{metric graphs} rather than domains or manifolds; however, we will now explain the basic ideas in the domain case, where we expect the results to continue to hold.

Given a subdomain $D \subset \Omega \subset \R^d$, and a number $\alpha > 0$, consider the first eigenvalue $\robinlambda[\alpha] (D)$ of the Laplacian on $D$ with Robin boundary conditions, defined as the smallest value of $\lambda$ for which the problem
\begin{displaymath}
\begin{aligned}
    -\Delta u &= \lambda u \qquad &&\text{in } D,\\
    \frac{\partial u}{\partial\nu} + \alpha u &= 0 \qquad &&\text{on } \partial D
\end{aligned}
\end{displaymath}
admits a nontrivial solution $u$. (Here $\partial D$ is, in principle, the complete topological boundary of $D$, although one could theoretically consider a variant with Neumann conditions on $\partial D \cap \partial \Omega$, and $\frac{\partial u}{\partial\nu}$ is the outward-pointing normal derivative to $u$ on $\partial D$; for simplicity and ease of exposition we are ignoring certain questions of regularity of $D$.) It is known that $\robinlambda[\alpha] (D) > 0$ as long as $\alpha > 0$; moreover, there is a well-known formula (see the references in Section~\ref{sec:robin-dependence-alpha}) which asserts that
\begin{equation}
\label{eq:robin-expansion-at-zero-domains}
    \robinlambda[\alpha] (D) = \alpha \frac{|\partial D|}{|D|} + o(\alpha)
\end{equation}
as $\alpha \to 0$. If, given a partition $\partition = (\Omega_1, \ldots, \Omega_k)$ of $\Omega$ into $k$ nonempty pairwise disjoint subdomains, we look at
\begin{displaymath}
    \Lambda^\alpha := \Lambda^\alpha (\partition) := \max \left\{\robinlambda[\alpha] (\Omega_1), \ldots, \robinlambda[\alpha] (\Omega_k) \right\}, 
\end{displaymath}
then it follows from \eqref{eq:robin-expansion-at-zero-domains} that $\frac{1}{\alpha} \Lambda^\alpha (\partition)$ converges exactly to the quantity \eqref{eq:cheeger-2} as $\alpha \to 0$. It might then be \emph{hoped} (and we will show in the context of metric graphs, for general $k$-partitions) that this convergence for fixed $\partition$ carries over to the infimum of $\frac{\Lambda^\alpha}{\alpha}$ among all partitions of $\Omega$, that is, that this infimum converges to the Cheeger constant as $\alpha \to 0$.

The problem of minimising a functional of the form $\Lambda^\alpha$ among all $k$-partitions of $\Omega$ is that of finding a so-called \emph{spectral minimal partition}, another type of partition problem which has been well studied in the last couple of decades, albeit almost exclusively for Laplacians with Dirichlet rather than Robin boundary conditions, see \cite{BNH17}. To date spectral minimal partitions based on Robin Laplacian eigenvalues have not been studied, with one notable exception \cite{BuFrGi18}, where the authors prove existence of a partition minimising the sum of the Robin eigenvalues $\sum_{i=1}^k \robinlambda[\alpha] (\Omega_i)$ rather than their maximum. Actually, for these sums, some of the same authors also used the same basic principle, the formula \eqref{eq:robin-expansion-at-zero-domains}, to prove a version of the so-called \emph{honeycomb conjecture} for the Robin Laplacian in \cite{BuFr19ccm}, see also \cite{BuFr19adv} (although their mechanic was somewhat different, in particular looking at the regime $k \to \infty$ for fixed $\alpha$, rather than $\alpha \to 0$ for fixed $k$).

As mentioned, we will study these problems in the specific setting of (compact) metric graphs (roughly, one may imagine a collection of finitely many bounded intervals glued together in a particular way at their endpoints; for a somewhat more formal introduction and references see Section~\ref{sec:cheeger}). For such a first attempt to link Cheeger and Robin partitions, graphs are a useful sandbox with major advantages over domains and manifolds: on the one hand, they still display complex (analytic and spectral) behaviour analogous to domains and manifolds; on the other, being locally one-dimensional objects they allow one to suppress or simplify many technical issues. For example, the boundary of a connected subgraph, or, equivalently, any Cheeger cut of the metric graph, is a finite set.

This has various consequences: first, existence results become greatly simplified, since only finitely many topologically distinct $k$-partitions of a given graph are possible; this induces a division of the set of all $k$-partitions into a finite number of equivalence classes, whereby within an equivalence class the number of boundary points on each edge is given. In particular, within an equivalence class one only needs to specify edge lengths of the partition elements to determine the partition completely, resulting in an essentially compact space. These ideas were exploited in \cite{KeKuLeMu21} to construct quite a general existence theory, which will be exploited here. Moreover, once existence is established, regularity of the optimisers is a non-issue.

Second, it is much easier to talk about convergence not just of the partition energies but of the partitions themselves, as there is a natural notion of Hausdorff convergence of partition subgraphs within each equivalence class.

But the essentially one-dimensional nature of the partition elements will allow us to prove certain monotonicity and continuity results for the Robin eigenvalues with respect to domain inclusion; this, in turn, will allow a kind of uniform control of the error term in \eqref{eq:robin-derivative-general} which will be critical for proving convergence of the normalised partition energies and (up to subsequences) the partitions themselves as $\alpha \to 0$. We will also be able to show, using similar ideas, that both the minimal Robin partition energies and (up to subsequences) the minimal Robin partitions themselves converge to their Dirichlet Laplacian counterparts as $\alpha \to \infty$.

Thus, at least at the level of the minimal partitions themselves, the Robin partitions interpolate between Dirichlet spectral minimal partitions and $k$-Cheeger cuts as $\alpha$ ranges from $\infty$ down to $0$.

It is to be strongly expected that analogous results hold for Euclidean domains; however, due to the technical and conceptual complexities arising from this case it will be deferred to future work.

This paper is organised as follows. In Section~\ref{sec:cheeger}, after a description of our notation and definitions around graphs and partitions of them, we will formally introduce general Cheeger cuts, Cheeger constants and the related Cheeger functionals on metric graphs, as well as formulate and prove the existence of a minimising partition for the Cheeger functionals (that is, the existence of a $k$-Cheeger cut for $k \geq 2$) in Theorem~\ref{thm:cheeger}. In this section we also give a self-contained description of partition convergence, see Section~\ref{sec:convergence-description}.

A particularity of metric graphs is, of course, what happens at the vertices; it turns out that if a vertex of higher degree is a boundary point, then the ``size'' of the boundary at that point is not canonically defined; this leads us to introduce the notion of the \emph{effective degree} of a vertex within a given subgraph, see Definition~\ref{def:boundary} and Section~\ref{sec:example}, as what we consider to be the right notion. This also coincides with the \emph{perimeter} of a metric subgraph as introduced recently in \cite{Ma23,Ma22}, as we will show in Appendix~\ref{sec:appendix}; see in particular Proposition~\ref{prop:perimeter-characterisation}.

In Section~\ref{sec:robin-smp} we will formally introduce the Robin and Dirichlet Laplacian eigenvalues and related functionals, and then state our main theorems: Theorem~\ref{thm:robin-existence} establishes existence of Robin spectral minimal partitions for every fixed $\alpha>0$, and some basic properties of their behaviour with respect to the boundary parameter $\alpha>0$; Theorem~\ref{thm:robin-limit} establishes convergence of the normalised spectral minimal energies and the corresponding partitions up to subsequences, both to their Cheeger counterparts as $\alpha \to 0$ and to their Dirichlet counterparts as $\alpha \to \infty$.

Section~\ref{sec:robin} is devoted to properties of the first eigenvalue of the Robin Laplacian on subgraphs of a given underlying metric graph, both in function of the parameter $\alpha$ and the edge lengths. These results are elementary, but to keep things self-contained and to ensure that all auxiliary results are available in exactly the form we need, we include the proofs.

The proof of Theorem~\ref{thm:robin-existence} is then given in Section~\ref{sec:robin-existence}, while the proof of Theorem~\ref{thm:robin-limit} is the subject of Section~\ref{sec:robin-cheeger}. In Appendix~\ref{sec:cheeger-alt} we collect a few remarks about some of the definitions we have used (in particular, regarding Cheeger partitions and the size of the boundary of a subgraph) and compare them with other possibilities.

\section{Cheeger cuts and partitions}
\label{sec:cheeger}

We start with some basics on subgraphs and partitions of graphs, necessary to introduce our Cheeger-type functionals and minimisation problem. Throughout the paper, we will always take $\Gamma = (\VertexSet,\EdgeSet)$ to be a fixed compact, connected metric graph with finite vertex set $\VertexSet = \VertexSet (\Gamma)$ and finite edge set $\EdgeSet = \EdgeSet (\Gamma)$ consisting of non-oriented edges, where multiple parallel edges and loops are allowed. In particular, every edge $e \in \EdgeSet$ of $\Gamma$ may be identified with an interval of finite length, and $\Gamma$ will be canonically identified with a compact metric space. Moreover, as is standard, whenever necessary we will assume an arbitrary point $x \in \Gamma$ to be a (possibly dummy) vertex. See, e.g., \cite{BeKu13,Mu19} for more details.

\subsection{Subgraphs and their boundary}

For technical reasons, we always assume that our subgraphs $\Omega \subset \Gamma$ are closed, and without loss of generality $\partial\Omega \subset \VertexSet (\Gamma)$. It will also be necessary to distinguish sub\emph{graphs} $\Omega$ from sub\emph{sets} of $\Gamma$: in the former case, we allow different connectivity relations at the boundary $\partial\Omega$ in $\Omega$ as compared with what happens in $\Gamma$. This will be necessary to handle the natural notion of subgraph and partition convergence arising from taking limits, as will be discussed in Sections~\ref{sec:example} and~\ref{sec:convergence-description}, and in particular guarantee lower semicontinuity of the Robin-type functionals with respect to these limits.

To illustrate what we mean about boundary connectivity, consider the graph $\Gamma$ depicted in Figure~\ref{fig:mutant-tadpole}.
\begin{figure}[H]
\begin{tikzpicture}
\draw[thick,bend left=90]  (2,0) edge (4,0);
\draw[thick,bend right=90]  (2,0) edge (4,0);
\draw[thick,bend left=25] (2,0) edge (4,0);
\draw[thick, bend right=25] (2,0) edge (4,0);
\draw[fill] (2,0) circle (1.5pt);
\draw[fill] (4,0) circle (1.5pt);
\node at (1.8,0) [anchor=east] {$v$};
\draw[thick] (2,0) -- (-0.25,1);
\draw[thick] (2,0) -- (-0.25,-1);
\draw[fill] (-0.25,1) circle (1.5pt);
\draw[fill] (-0.25,-1) circle (1.5pt);
\node at (0.875,0.5) [anchor=south] {$e_1$};
\node at (0.875,-0.5) [anchor=north] {$e_2$};
\end{tikzpicture}
\caption{The graph $\Gamma$, not to scale, which we will use as a running example. We will assume $|e_1| = |e_2| = 1$, while the other edges have length $\frac{1}{2}$ each.}\label{fig:mutant-tadpole}
\end{figure}
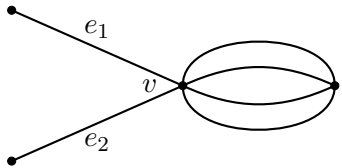
In Figure~\ref{fig:subgraph-topology}, we see two different subgraphs, $\Omega_1$ and $\Omega_2$ associated with the same subset of $\Gamma$, namely the union of the four parallel edges (a ``4-pumpkin'').
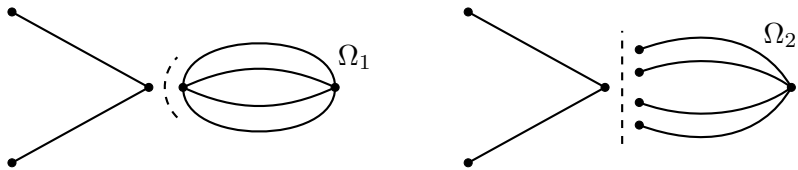
\begin{figure}[H]
\begin{tikzpicture}
\draw[thick,bend left=90]  (2,0) edge (4,0);
\draw[thick,bend right=90]  (2,0) edge (4,0);
\draw[thick,bend left=25] (2,0) edge (4,0);
\draw[thick, bend right=25] (2,0) edge (4,0);
\draw[fill] (2,0) circle (1.5pt);
\draw[fill] (4,0) circle (1.5pt);
\draw[thick] (1.55,0) -- (-0.25,1);
\draw[thick] (1.55,0) -- (-0.25,-1);
\draw[fill] (1.55,0) circle (1.5pt);
\draw[fill] (-0.25,-1) circle (1.5pt);
\draw[fill] (-0.25,1) circle (1.5pt);
\draw[thick,dashed,bend left=45] (1.925,-0.4) edge (1.925,0.4);
\node at (3.9,0.4) [anchor=west] {$\Omega_1$};
\draw[thick] (7.55,0) -- (5.75,1);
\draw[thick] (7.55,-0) -- (5.75,-1);
\draw[thick,dashed] (7.775,-0.75) -- (7.775,0.75);
\draw[fill] (7.55,0) circle (1.5pt);
\draw[fill] (5.75,-1) circle (1.5pt);
\draw[fill] (5.75,1) circle (1.5pt);
\draw[thick] (8,0.5) .. controls (8.8,0.8) and (9.6,0.7) .. (10,0);
\draw[thick] (8,0.2) .. controls (8.8,0.5) and (9.6,0.3) .. (10,0);
\draw[thick] (8,-0.2) .. controls (8.8,-0.5) and (9.6,-0.3) .. (10,0);
\draw[thick] (8,-0.5) .. controls (8.8,-0.8) and (9.6,-0.7) .. (10,0);
\draw[fill] (8,0.5) circle (1.5pt);
\draw[fill] (8,0.2) circle (1.5pt);
\draw[fill] (8,-0.2) circle (1.5pt);
\draw[fill] (8,-0.5) circle (1.5pt);
\draw[fill] (10,0) circle (1.5pt);
\node at (9.5,0.4) [anchor=south west] {$\Omega_2$};
\end{tikzpicture}
\caption{Two different subgraphs of $\Gamma$ associated with the same subset. This example will be explored further in Section~\ref{sec:example}.}
\label{fig:subgraph-topology}
\end{figure}
This corresponds to the notion of cutting through a vertex (cf.\ \cite[Section~3.1]{BeKeKuMu19}). That is, given a subgraph $\Omega \subset \Gamma$ and a vertex $v \in \VertexSet (\Gamma) \cap \partial\Omega$, regarded as a point in $\Gamma$, cutting through $v$ leads to a natural multi-valued mapping from $v$ onto a set of points (without loss of generality vertices) in $\Omega$, the images of $v$ under the cut.

\begin{definition}
\label{def:subgraph-boundary-vertex}
Given a subgraph $\Omega \subset \Gamma$ and a vertex $v \in \VertexSet (\Gamma) \cap \partial\Omega$, we refer to the image vertices of $v$ in $\Omega$ under the cut, as the \emph{descendants} of $v$ in $\Omega$.
\end{definition}

In the examples in Figure~\ref{fig:subgraph-topology}, $v$ has one descendant in $\Omega_1$, and four in $\Omega_2$.

In practice, in an attempt to simplify notation and language, whenever it does not lead to ambiguities we will not distinguish notationally between a subgraph $\Omega$ of $\Gamma$ and the subset of $\Gamma$ from which it is extracted (called the \emph{support} of $\Omega$ in \cite{KeKuLeMu21}). Unless otherwise specified, $\VertexSet (\Gamma) \cap \partial\Omega$ will denote the topological boundary of $\Omega$ as a subset of $\Gamma$, and $\partial\Omega$ will denote the set of all descendant vertices of $\VertexSet (\Gamma) \cap \partial\Omega$, which is a set of points in the subgraph $\Omega$.

We are interested in measuring the size of such boundary sets $\partial\Omega$. An important subtlety arises when a vertex $v$ of $\Gamma$ of degree at least three lies on the boundary of $\Omega$: 

\begin{definition}
\label{def:boundary}
Let $\Omega$ be a subgraph of $\Gamma$ and let $v \in \Gamma$ be an arbitrary point of $\partial\Omega$ (without loss of generality a vertex, possibly of degree two). Suppose it has descendants $v_1,\ldots,v_m$, $m\geq 1$, in $\Omega$.
\begin{enumerate}
\item[{\bf (a)}] The \emph{degree of $v_i$ in $\Omega$}, $\deg_\Omega(v_i)$, is the number of connected components of
\begin{displaymath}
    B_\varepsilon (v_i) \setminus \{v_i\}
\end{displaymath}
for any $\varepsilon>0$ sufficiently small (where $B_\varepsilon (v_i)$ is the open ball in the subgraph $\Omega$).
\item[{\bf (b)}] The \emph{effective degree} of $v_i$ in $\Omega$ will be defined by
\begin{displaymath}
    \EffDeg_\Omega (v_i) := \min \{\deg_\Omega(v_i), \deg_\Gamma(v)
    - \deg_\Omega(v_i)\}.
\end{displaymath}
\item[{\bf (c)}] The \emph{size of the boundary} $\partial \Omega$ will be given by
\begin{displaymath}
    |\partial\Omega| := \sum_{v_i \in \partial\Omega} \EffDeg_\Omega (v_i).
\end{displaymath}
\end{enumerate}
\end{definition}

While this definition may appear strange at first glance, it essentially gives the largest possible size of the boundary of a subgraph $\Omega$ which is always lower semicontinuous with respect to shifting the boundary vertices of $\Omega$ within $\Gamma$; we will give a prototypical example of this in Section~\ref{sec:example}. See also the discussion of possible alternatives in Appendix~\ref{app:alternative-boundary}.

In the important special case where the connectivity of $\Omega$ at $\partial\Omega$ is unchanged from $\Gamma$, that is, where each vertex $v \in \VertexSet (\Gamma) \cap \partial\Omega$ has only one descendant, it also means that $|\partial\Omega|$ coincides with the \emph{perimeter} of $\Omega$, $\perim (\Omega)$, as introduced recently in \cite{Ma23,Ma22}, see also \eqref{eq:perimeter}; we will prove this in Appendix~\ref{sec:appendix}, see Proposition~\ref{prop:perimeter-characterisation}. In this case we will also not distinguish between $v \in \VertexSet (\Gamma) \cap \partial\Omega$ and its unique descendant.

In the example in Figure~\ref{fig:subgraph-topology}, the effective degree in $\Omega_1$ of the central vertex is $2 = \min \{4, 2\}$, meaning $|\partial\Omega_1|=2$; on the right-hand side, each of the four image vertices of $v$ has effective degree $1 = \min \{1, 5\}$, so $|\partial\Omega_2|=4$. 

\subsection{Partitions and Cheeger functionals}

We can now define precisely what we mean by a partition. For the following definition we assume without loss of generality that all boundary points of the subgraphs are (possibly dummy) vertices in $\Gamma$.

\begin{definition}
\label{def:partition}
Given $\Gamma$, we call a collection $\partition = (\Omega_1, \ldots, \Omega_k)$ of $k \geq 1$ connected subgraphs $\Omega_1,\ldots,\Omega_k$ of $\Gamma$ with nonempty interiors a \emph{$k$-partition} if, for all $i \neq j$, $\Omega_i \cap \Omega_j \subset \VertexSet (\Gamma) \cap \partial\Omega_i \cap \partial\Omega_j$ is a finite set in $\Gamma$.
\end{definition}

Following \cite{KeKuLeMu21}, a partition $\partition = (\Omega_1,\ldots,\Omega_k)$ is said to be \emph{exhaustive} (or, following \cite{BNH17}, \emph{strong}; one also sometimes speaks of \emph{subpartitions}) if $\bigcup_{i=1}^k \Omega_i = \Gamma$; in general we will \emph{not} assume this property.

We can now introduce our first main object of study, the Cheeger functional of a partition of $\Gamma$.

\begin{definition}
\label{def:cheeger}
Let $\Gamma$ be a connected, compact metric graph.
\begin{enumerate}
\item[{\bf (a)}] We define the \emph{Cheeger functional} $\energyc : \partition \mapsto \R$ on the set of all partitions $\partition$ of $\Gamma$ by
\begin{equation}
\label{eq:energyc}
    \energyc (\partition) = \max_{i=1,\ldots,k} \frac{|\partial \Omega_i|}{|\Omega_i|},
\end{equation}
where $\partition = (\Omega_1,\ldots,\Omega_k)$. By way of analogy with integral functionals, we will call the value that $\energyc$ takes on a given partition $\partition$, $\energyc (\partition)$, the \emph{(Cheeger) energy} of $\partition$.
\item[{\bf (b)}] The \emph{$k$-th Cheeger constant} of $\Gamma$, $k \geq 2$, is given by
\begin{equation}
\label{eq:cheegerk}
    \cheeger[k] (\Gamma) = \inf \{\energyc (\partitionk[k]): \partitionk[k] \text{ is a $k$-partition of $\Gamma$} \}.
\end{equation}
\item[{\bf (c)}] Any $k$-partition of $\Gamma$ on which the infimum in \eqref{eq:cheegerk} is attained will be called a \emph{$k$-Cheeger cut} of $\Gamma$.
\end{enumerate}
\end{definition}

As intimated above, our definition of $|\partial\Omega_i|$ ensures that $\energyc$ is lower semicontinuous in an appropriate sense (see Section~\ref{sec:cheeger-proof}); this, in turn, leads to a fundamental existence result.

\begin{theorem}
\label{thm:cheeger}
Let $\Gamma$ be a connected, compact graph. For all $k\geq 2$ there exists a $k$-partition $\partitionk[k]^C$ of $\Gamma$ such that
\begin{displaymath}
    \cheeger[k] (\Gamma) = \energyc (\partitionk[k]^C).
\end{displaymath}
\end{theorem}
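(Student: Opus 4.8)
The plan is to prove existence by the direct method of the calculus of variations, exploiting the fact that the essentially one-dimensional, finitely-structured nature of $\Gamma$ forces any minimising sequence to live, after passing to a subsequence, in a single compact family of partitions. First I would record that the infimum in \eqref{eq:cheegerk} is finite: since $\Gamma$ is connected with positive total length, one can always cut it into $k$ connected subgraphs with nonempty interiors (for instance by placing cut points on one sufficiently long edge), and any such partition has finite Cheeger energy. I then fix a minimising sequence $(\partition_n)$ with $\energyc(\partition_n) \to \cheeger[k](\Gamma)$ and, without loss of generality, $\energyc(\partition_n) \le \cheeger[k](\Gamma) + 1 =: M$ for all $n$.

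Two a priori bounds follow directly from the structure of $\energyc$. On the one hand, in any $k$-partition with $k \ge 2$ every element $\Omega_i$ has $\partial\Omega_i \neq \emptyset$: if $\partial\Omega_i$ were empty, then $\Omega_i$ would be a full connected component of $\Gamma$, hence all of $\Gamma$, forcing every other element to meet $\Gamma$ in a finite set and so to have empty interior. Each counted boundary point contributes effective degree at least one, so $|\partial\Omega_i| \ge 1$; combined with $|\partial\Omega_i|/|\Omega_i| \le M$ this yields the non-degeneracy bound $|\Omega_i| \ge 1/M$ uniformly in $n$ and $i$. On the other hand, reading the same inequality the other way gives $|\partial\Omega_i| \le M|\Omega_i| \le M|\Gamma|$, so the total number of boundary (descendant) points of each $\partition_n$ is uniformly bounded. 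Consequently only finitely many combinatorial types of partition occur along the sequence---a ``type'' being the specification, for each edge of $\Gamma$, of how many cut points it carries and how the resulting segments are assigned to the elements---and, passing to a subsequence, I may assume that all $\partition_n$ share one fixed type.

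Within a fixed type a partition is determined by the positions of its finitely many cut points, i.e.\ by a point in a compact product of closed intervals (one coordinate per cut point, ranging over the closure of the relevant edge of $\Gamma$). Extracting a further convergent subsequence yields limiting cut-point positions and hence a candidate $\partitionk[k]^C$; here I would invoke the cutting-through-a-vertex and descendant formalism of Section~\ref{sec:cheeger} to make sense of the limit when cut points slide into vertices of $\Gamma$, so that $\partitionk[k]^C$ is again a bona fide object. Its elements remain connected, since a shrinking internal segment merely collapses to a cut vertex while preserving connectedness of the closed subgraph, and they retain nonempty interior by the bound $|\Omega_i| \ge 1/M$; thus $\partitionk[k]^C$ is a genuine $k$-partition.

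It remains to show $\energyc(\partitionk[k]^C) \le \cheeger[k](\Gamma)$, which reduces to lower semicontinuity of $\energyc$ along the above convergence, and this is where the hard part lies. The volume $|\Omega_i|$ is simply the total length of the element and depends continuously on the cut-point positions even across type-degenerations, so the only genuine issue is the behaviour of $|\partial\Omega_i|$. The delicate case is precisely when several boundary points of a single element approach a common vertex $v$ of $\Gamma$ and coalesce into fewer descendants in the limit: here the effective-degree definition (Definition~\ref{def:boundary}) is tailored so that the effective degree $\EffDeg_\Omega$ of the limiting descendants is at most the sum of the effective degrees of the approaching boundary points, which is exactly what rules out an upward jump and gives $|\partial\Omega_i| \le \liminf_n |\partial\Omega_{i,n}|$. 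Combined with continuity and positivity of the volume, each ratio $|\partial\Omega_i|/|\Omega_i|$, and hence their maximum $\energyc$, is lower semicontinuous, so $\energyc(\partitionk[k]^C) \le \liminf_n \energyc(\partition_n) = \cheeger[k](\Gamma)$. Since $\partitionk[k]^C$ is itself a $k$-partition the reverse inequality is immediate, and the infimum is therefore attained.
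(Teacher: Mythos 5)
Your proposal is correct and follows essentially the same route as the paper: the paper packages your compactness argument (finitely many combinatorial types, convergence of cut-point positions within a type, exclusion of vanishing elements) into the cited framework of \cite[Theorem~3.13]{KeKuLeMu21}, and reduces everything to verifying coercivity plus lower semicontinuity of $\energyc$, exactly as you do. The one step you assert rather than verify --- that when several boundary points coalesce at a vertex the effective degree of the limiting descendants is at most the sum of the effective degrees of the approaching points --- is precisely the content of Lemma~\ref{lem:subgraph-convergence}, whose short combinatorial proof is the only technically nontrivial part of the paper's argument.
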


(Note that when $k=1$ all the above definitions make sense, and the theorem continues to hold, but in this case the \emph{first Cheeger constant}, $\cheeger[1] (\Gamma)$, would be equal to $0$, the minimum being attained by the trivial partition $\partition = (\Gamma)$.)

There is another way one can define Cheeger partitions, which is perhaps more common on domains. This definition could also be used here; however, since it is not essential for our main results, they will be discussed in Appendix~\ref{app:cheeger-alt}.

There are also other possible ways to define $|\partial\Omega|$ which would still lead to Theorem~\ref{thm:cheeger} (albeit with a lower value of $\cheeger[k] (\Gamma)$); see Appendix~\ref{app:alternative-boundary}. Note, however, that the na\"ive choice $|\partial\Omega| = \sum_{v \in \partial\Omega} \deg_\Omega (v)$ will \emph{not} work; for this definition a minimising partition may not exist. See Section~\ref{sec:example} for an example. We will prove Theorem~\ref{thm:cheeger} using the framework developed in \cite{KeKuLeMu21} afterwards, in Section~\ref{sec:cheeger-proof}, after we have discussed our notion of partition convergence in Section~\ref{sec:convergence-description}.

\begin{remark}
\label{rem:cheeger-p-type}
The proof of Theorem~\ref{thm:cheeger} can also be trivially adapted to show that the minimisation problem for $p$-norm-type functionals of the form
\begin{displaymath}
    \energyc_p (\partition) = \left[\sum_{i=1}^k \left(
    \frac{|\partial\Omega_i|}{|\Omega_i|}\right)^p\right]^{1/p}
\end{displaymath}
$1\leq p < \infty$, also admits a solution; since we are only interested in the $\infty$-norm case, which corresponds to the classical Cheeger problem, we will not go into details. See also Remark~\ref{rem:robin-p-type}.
\end{remark}

\subsection{A basic example}
\label{sec:example}

To illustrate why $\sum_{v \in \partial\Omega} \deg_\Omega (v)$ will not work as a definition of $|\partial\Omega|$ to ensure the existence of a $k$-Cheeger cut, we will consider the optimal $3$-cut of the graph $\Gamma$ depicted in Figure~\ref{fig:mutant-tadpole}, consisting of two pendant edges of length $1$ each, joined at a common vertex $v$ to a \emph{$4$-pumpkin graph}, each of whose edges has length $\frac{1}{2}$.

If in a first step we avoid cutting through the vertex $v$ and only allow cuts through edges, we can find a sequence of $3$-partitions $\partition_n = (\Omega_{i,n})_{i=1}^3$ for which $\energyc (\partition_n) \to 1^+$:
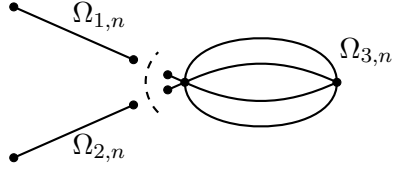
\begin{figure}[H]
\begin{tikzpicture}
\draw[thick,bend left=90]  (2,0) edge (4,0);
\draw[thick,bend right=90]  (2,0) edge (4,0);
\draw[thick,bend left=25] (2,0) edge (4,0);
\draw[thick, bend right=25] (2,0) edge (4,0);
\draw[fill] (2,0) circle (1.5pt);
\draw[fill] (4,0) circle (1.5pt);
\draw[thick] (2,0) -- (1.775,0.1);
\draw[thick] (2,0) -- (1.775,-0.1);
\draw[thick] (1.325,0.3) -- (-0.25,1);
\draw[thick] (1.325,-0.3) -- (-0.25,-1);
\draw[fill] (1.775,0.1) circle (1.5pt);
\draw[fill] (1.775,-0.1) circle (1.5pt);
\draw[fill] (1.325,0.3) circle (1.5pt);
\draw[fill] (1.325,-0.3) circle (1.5pt);
\draw[fill] (-0.25,-1) circle (1.5pt);
\draw[fill] (-0.25,1) circle (1.5pt);
\draw[thick,dashed,bend left=45] (1.65,-0.4) edge (1.65,0.4);
\node at (0.875,0.55) [anchor=south] {$\Omega_{1,n}$};
\node at (0.875,-0.55) [anchor=north] {$\Omega_{2,n}$};
\node at (3.9,0.4) [anchor=west] {$\Omega_{3,n}$};
\end{tikzpicture}
\caption{A $3$-partition $\partition_n$ of $\Gamma$. As $n \to \infty$ the cut points should move towards $v$.}\label{fig:mutant-dissection}
\end{figure}

Indeed, with the above setup, clearly
\begin{displaymath}
    1 < \frac{|\partial \Omega_{1,n}|}{|\Omega_{1,n}|}
    = \frac{|\partial \Omega_{2,n}|}{|\Omega_{2,n}|} \longrightarrow 1,
    \qquad
    1 > \frac{|\partial \Omega_{3,n}|}{|\Omega_{3,n}|} \longrightarrow
    \frac{2}{4\cdot \tfrac{1}{2}} = 1.
\end{displaymath}
A simple but tedious examination of all possible different cases, whose details we will omit, shows that no partition which does not involve cutting through the vertex $v$ can have Cheeger energy $\leq 1$; thus $\cheeger[3] (\Gamma) = 1$. Now the limit partition $\partition = (\Omega_i)_{i=1}^3$, shown in Figure~\ref{fig:mutant-3-cut} (left), is the only partition which can possibly have energy $1$. But its \emph{actual} energy depends on how $|\partial\Omega_3|$ is defined: any value \emph{greater} than $2$ would mean that $\energyc (\partition) > 1$ and thus no minimiser exists.
\begin{figure}[H]
\begin{tikzpicture}[scale=0.8]
\draw[thick,bend left=90]  (2,0) edge (4,0);
\draw[thick,bend right=90]  (2,0) edge (4,0);
\draw[thick,bend left=25] (2,0) edge (4,0);
\draw[thick, bend right=25] (2,0) edge (4,0);
\draw[fill] (2,0) circle (1.5pt);
\draw[fill] (4,0) circle (1.5pt);
\draw[thick] (1.55,0.2) -- (-0.25,1);
\draw[thick] (1.55,-0.2) -- (-0.25,-1);
\draw[fill] (1.55,0.2) circle (1.5pt);
\draw[fill] (1.55,-0.2) circle (1.5pt);
\draw[fill] (-0.25,-1) circle (1.5pt);
\draw[fill] (-0.25,1) circle (1.5pt);
\draw[thick,dashed,bend left=45] (1.925,-0.4) edge (1.925,0.4);
\node at (0.875,0.55) [anchor=south] {$\Omega_{1}$};
\node at (0.875,-0.55) [anchor=north] {$\Omega_{2}$};
\node at (3.9,0.4) [anchor=west] {$\Omega_{3}$};
\draw[thick] (7.55,0.2) -- (5.75,1);
\draw[thick] (7.55,-0.2) -- (5.75,-1);
\draw[thick,dashed] (7.775,-0.75) -- (7.775,0.75);
\draw[fill] (7.55,0.2) circle (1.5pt);
\draw[fill] (7.55,-0.2) circle (1.5pt);
\draw[fill] (5.75,-1) circle (1.5pt);
\draw[fill] (5.75,1) circle (1.5pt);
\node at (6.875,0.55) [anchor=south] {$\Omega_{1}$};
\node at (6.875,-0.55) [anchor=north] {$\Omega_{2}$};
\draw[thick] (8,0.5) .. controls (8.8,0.8) and (9.6,0.7) .. (10,0);
\draw[thick] (8,0.2) .. controls (8.8,0.5) and (9.6,0.3) .. (10,0);
\draw[thick] (8,-0.2) .. controls (8.8,-0.5) and (9.6,-0.3) .. (10,0);
\draw[thick] (8,-0.5) .. controls (8.8,-0.8) and (9.6,-0.7) .. (10,0);
\draw[fill] (8,0.5) circle (1.5pt);
\draw[fill] (8,0.2) circle (1.5pt);
\draw[fill] (8,-0.2) circle (1.5pt);
\draw[fill] (8,-0.5) circle (1.5pt);
\draw[fill] (10,0) circle (1.5pt);
\node at (9.5,0.4) [anchor=south west] {$\widetilde\Omega_{3}$};
\draw[thick] (11.75,-1) -- (13.55,0);
\draw[thick] (13.55,0) -- (13.325,0.1);
\draw[thick] (13.55,0) -- (13.85,0.3);
\draw[thick] (13.55,0) -- (13.85,0.1);
\draw[thick] (13.55,0) -- (13.85,-0.1);
\draw[thick] (13.55,0) -- (13.85,-0.3);
\draw[thick] (11.75,0.9) -- (13.1,0.2);
\draw[thick] (14.2,0.5) .. controls (15,0.8) and (15.8,0.7) .. (16.2,0);
\draw[thick] (14.2,0.2) .. controls (15,0.5) and (15.8,0.3) .. (16.2,0);
\draw[thick] (14.2,-0.2) .. controls (15,-0.5) and (15.8,-0.3) .. (16.2,0);
\draw[thick] (14.2,-0.5) .. controls (15,-0.8) and (15.8,-0.7) .. (16.2,0);
\draw[fill] (13.1,0.2) circle (1.5pt);
\draw[fill] (11.75,0.9) circle (1.5pt);
\draw[fill] (13.325,0.1) circle (1.5pt);
\draw[fill] (11.75,-1) circle (1.5pt);
\draw[fill] (13.55,0) circle (1.5pt);
\draw[fill] (13.85,0.3) circle (1.5pt);
\draw[fill] (13.85,0.1) circle (1.5pt);
\draw[fill] (13.85,-0.1) circle (1.5pt);
\draw[fill] (13.85,-0.3) circle (1.5pt);
\draw[fill] (14.2,0.5) circle (1.5pt);
\draw[fill] (14.2,0.2) circle (1.5pt);
\draw[fill] (14.2,-0.2) circle (1.5pt);
\draw[fill] (14.2,-0.5) circle (1.5pt);
\draw[fill] (16.2,0) circle (1.5pt);
\draw[thick,dashed] (14.025,-0.7) -- (14.025,0.7);
\draw[thick,dashed] (13.45,0.3125) -- (12.95,-0.0625);
\node at (12.875,0.55) [anchor=south] {$\widetilde \Omega_{1,n}$};
\node at (12.875,-0.55) [anchor=north] {$\widetilde \Omega_{2,n}$};
\node at (15.7,0.4) [anchor=south west] {$\widetilde\Omega_{3,n}$};
\end{tikzpicture}
\caption{Left: The only candidate $3$-Cheeger cut of $\Gamma$, $\partition$. Its actual Cheeger energy depends critically on how the size of the cut at $v$ is measured. Centre: A different partition built out of the same subsets, which is the natural limit of suitable partitions $\partition_n = (\widetilde\Omega_{1,n},\widetilde\Omega_{2,n},\widetilde\Omega_{3,n})$ where $\widetilde \Omega_{3,n}$ is cut from the rest of $\Gamma$ to the right of $v$; one such possibility is depicted on the right. The partition in the centre has a strictly higher Cheeger energy (namely $2$) than the partition on the left.}\label{fig:mutant-3-cut}
\end{figure}
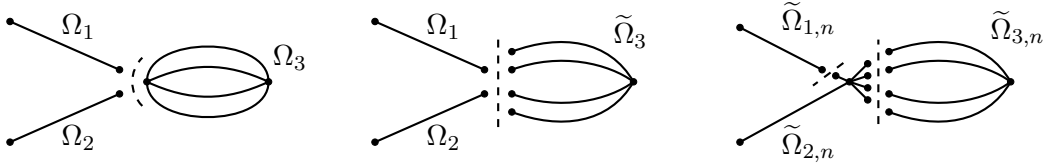

Thus, in particular, the size of a boundary vertex in $\partial\Omega$ cannot simply be its degree in $\Omega$ (here $4$). In this example, $\perim (\Omega_3) = 2$ (see Proposition~\ref{prop:perimeter-characterisation}), and it is, not coincidentally, this value which guarantees that $\energyc (\partition_n) \to \energyc (\partition)$. Indeed, $2$ is obviously the largest value for which $\energyc$ is lower semicontinuous.

In accordance with Definition~\ref{def:boundary}, $\EffDeg_{\Omega_3} (v) = \min \{4,2\} = 2$ (with $\EffDeg_{\Omega_1} (v) = \EffDeg_{\Omega_2} (v) = \min \{1,5\} = 1$); while, as mentioned, $\perim (\Omega_3) = 2$ as well. See Appendix~\ref{app:alternative-boundary} for a further discussion.

\subsection{Convergence of subgraphs and partitions}
\label{sec:convergence-description}

We briefly recall the notion of convergence of subgraphs $\Omega_n \subset \Gamma$ of a fixed graph $\Gamma$ introduced in \cite[Section~3]{KeKuLeMu21}, with respect to which our functionals will be lower semicontinuous. Here we will give slightly different (but equivalent) definitions using homeomorphisms. Given two graphs $\Gamma_1$ and $\Gamma_2$, by a homeomorphism $\Phi : \Gamma_1 \to \Gamma_2$ we mean an invertible continuous map with continuous inverse, for the respective natural topologies on $\Gamma_1$ and $\Gamma_2$. It is immediate from the continuous invertibility that if there exists such a homeomorphism $\Phi : \Gamma_1 \to \Gamma_2$, then, assuming without loss of generality that $\Gamma_1$ and $\Gamma_2$ do not have any degree-2 vertices, $\Phi$ is also a bijection from $\VertexSet (\Gamma_1)$ onto $\VertexSet (\Gamma_2)$; moreover, $\deg_{\Gamma_2} \Phi(v) = \deg_{\Gamma_1} v$ for all $v \in \VertexSet (\Gamma_1)$.

\begin{definition}
\label{def:vertex-homeo}
We call $\Phi: \Gamma \to \Gamma$ is a \emph{vertex-preserving homeomorphism} if it is a homeomorphism from $\Gamma$ to $\Gamma$ and $\Phi(v) = v$ for all $v \in \VertexSet (\Gamma)$.
\end{definition}

In this case, the continuity and invertibility of $\Phi$ also imply that the image of any given edge $e$ under $\Phi$ is exactly $e$. Intuitively, we are fixing an underlying discrete graph and varying the edge lengths. Note that, at least for some graphs $\Gamma$, there may exist non-vertex-preserving homeomorphisms: this can be the case if the underlying discrete graph has symmetries which $\Gamma$ does not. But since this is irrelevant for our purposes, we do not go into details.

\begin{definition}
\label{def:subgraph-similar}
Let $\Omega_1$ and $\Omega_2$ be two connected subgraphs of a given graph $\Gamma$, and denote the associated subsets of $\Gamma$ by $\widetilde\Omega_1$ and $\widetilde\Omega_2$, respectively. We say that $\Omega_1$ and $\Omega_2$ are \emph{similar} if, cumulatively,
\begin{enumerate}
\item[(1)] there exists a vertex-preserving homeomorphism $\Phi : \Gamma \to \Gamma$ such that $\Phi (\widetilde\Omega_1) = \widetilde\Omega_2$, and
\item[(2)] the natural mapping between the subgraphs $\Omega_1$ and $\Omega_2$ induced by $\Phi$ is also a homeomorphism.
\end{enumerate}
\end{definition}

Condition (1) says that $\Omega_2$ can be formed from $\Omega_1$ by rescaling the edges of the former; in fact, more than that, one forms $\Omega_2$ from $\Omega_1$ by, for any each edge of $\Gamma$, shifting the location of any boundary vertices of $\Omega_1$ within that edge. Condition (2) guarantees additionally that, if $v \in \VertexSet (\Gamma)$ is a boundary vertex of both $\Omega_1$ and $\Omega_2$, then there is a canonical bijection between its descendants in $\Omega_1$ and its descendants in $\Omega_2$ which preserves all connectivity relations between the two. In particular, there is a canonical bijection between $\partial\Omega_1$ and $\partial\Omega_2$; in this case we speak of similar boundary points.

Both conditions are necessary. Condition (1) is insufficient to guarantee that $\Omega_1$ and $\Omega_2$ have the same underlying discrete graph; for example, in Figure~\ref{fig:subgraph-topology}, (1) (but not (2)) is satisfied for the pair $\Omega_1$ and $\Omega_2$. Likewise, (2) by itself is also insufficient; for example, in Figure~\ref{fig:mutant-3-cut}, there is a homeomorphism from $\widetilde{\Omega}_{1,n}$ onto $\Omega_2$.

\begin{definition}
\label{def:subgraph-convergence}
Suppose the connected subgraphs $\Omega_n \subset \Gamma$ are similar for all $n \in \N$. We write $\Omega_n \to \Omega$ if the Hausdorff distance between $\Omega_n$ and $\Omega$ as subsets of $\Gamma$ tends to zero, $|\Omega_n \triangle \Omega| \to 0$, and if the graph $\Omega$ has the same connectivity as $\Omega_n$, that is, for all $x,y \in \Omega$ there are $x_n,y_n \in \Omega_n$ such that $x_n \to x$ and $y_n \to y$ in $\Gamma$, and $\dist_{\Omega_n} (x_n,y_n) \to \dist_\Omega (x,y)$.
\end{definition}

In this case, each sequence of similar boundary points $v_n \in \partial\Omega_n$ must be a Cauchy sequence converging to some $v \in \Gamma$ which will be a boundary point of $\Omega$. The connectivity requirement (2) in the definition means that if two such sequences $v_{n,1}, v_{n,2}$ converge to the same vertex $v \not\in \partial\Omega_n$ along different edges $e_{1} \ni v_{n,1}$, $e_{2} \ni v_{n,2}$, then $e_1$ and $e_2$ are not considered to be adjacent in $\Omega$ at $v$.
\begin{figure}[ht]
\begin{center}
\begin{tikzpicture}
\draw[fill] (2,0) circle (1.5pt);
\draw[thick] (2,0) -- (3,0);
\draw[thick,dashed] (3,0) -- (3.5,0);
\node at (2,0) [anchor=north] {$v$};
\draw[thick] (2,0) -- (1.5,0.2222);
\draw[thick] (2,0) -- (1.5,-0.2222);
\draw[fill,blue] (-0.25,1) circle (1.5pt);
\draw[fill,blue] (-0.25,-1) circle (1.5pt);
\node at (0.5,0.667) [anchor=south] {$e_1$};
\node at (0.5,-0.667) [anchor=north] {$e_2$};
\draw[thick,blue] (-0.25,1) -- (-0.25,-1);
\draw[thick,blue] (-0.25,1) -- (-0.55,1);
\draw[thick,blue,dashed] (-0.55,1) -- (-1.05,1);
\draw[thick,blue] (-0.25,-1) -- (-0.55,-1);
\draw[thick,blue,dashed] (-0.55,-1) -- (-1.05,-1);
\draw[thick,blue] (-0.25,-1) -- (-0.45,-1.2);
\draw[thick,blue,dashed] (-0.45,-1.2) -- (-0.85,-1.6);
\draw[thick,blue] (-0.25,1) -- (1.5,0.2222);
\draw[thick,blue] (-0.25,-1) -- (1.5,-0.2222);
\draw[fill,blue] (1.5,0.2222) circle (1.5pt);
\draw[fill,blue] (1.5,-0.2222) circle (1.5pt);
\node at (1.5,0.2222) [anchor=south] {$v_{n,1}$};
\node at (1.5,-0.2222) [anchor=north] {$v_{n,2}$};
\node at (-0.5,0) [anchor=east] {$\Omega_n$};
\draw[-{Stealth[scale=0.5,angle'=60]},line width=2.5pt] (4.25,0) -- (5.25,0);
\draw[thick,blue] (6.75,1) -- (6.75,-1);
\draw[thick,blue] (6.75,1) -- (6.45,1);
\draw[thick,blue,dashed] (6.45,1) -- (5.95,1);
\draw[thick,blue] (6.75,-1) -- (6.45,-1);
\draw[thick,blue,dashed] (6.45,-1) -- (5.95,-1);
\draw[thick,blue] (6.75,-1) -- (6.55,-1.2);
\draw[thick,blue,dashed] (6.55,-1.2) -- (6.15,-1.6);
\draw[fill,blue] (6.75,1) circle (1.5pt);
\draw[fill,blue] (6.75,-1) circle (1.5pt);
\draw[thick,blue] (6.75,1) -- (6.75,-1);
\draw[thick,blue] (6.75,1) -- (9,1);
\draw[thick,blue] (6.75,-1) -- (9,-1);
\draw[fill,blue] (9,1) circle (1.5pt);
\draw[fill,blue] (9,-1) circle (1.5pt);
\node at (7.875,1) [anchor=south] {$e_1$};
\node at (7.875,-1) [anchor=north] {$e_2$};
\node at (9,1) [anchor=west] {$v_1$};
\node at (9,-1) [anchor=west] {$v_2$};
\node at (7.875,0) {$\Omega$};
\end{tikzpicture}
\end{center}
\caption{The subgraph $\Omega_n$ (in blue), for which $v_{n,1} \to v$ and $v_{n,2} \to v$ as $n \to \infty$, on the left. The subgraph $\Omega$ obtained in the limit is, as a graph, equivalent to the one depicted on the right, in which $v_1$ and $v_2$ are the two descendant vertices of $v$ in $\Omega$ in the sense of Definition~\ref{def:subgraph-boundary-vertex}; in particular, $e_1$ and $e_2$ are not adjacent in $\Omega$.}\label{fig:subgraph-convergence}
\end{figure}
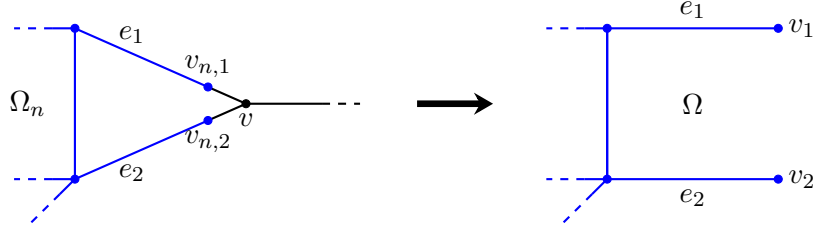

The limit graph $\Omega$ need not be similar to the $\Omega_n$ in the sense of Definition~\ref{def:subgraph-similar}; for example, in Figure~\ref{fig:mutant-3-cut}, $\widetilde{\Omega}_{2,n}$ and $\Omega_2$ are clearly not similar.

One of the key points of subgraph convergence is lower semicontinuity of the Cheeger functional. That $|\Omega_n| \to |\Omega|$ is immediate from the definition; for the boundary sets we have the following result.

\begin{lemma}
\label{lem:subgraph-convergence}
Suppose $\Omega_n \to \Omega$ in the sense of Definition~\ref{def:subgraph-convergence}. Then, for all $n \in \N$,
\begin{displaymath}
   |\partial\Omega| \leq |\partial\Omega_n|
\end{displaymath}
\end{lemma}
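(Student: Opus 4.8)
The plan is to prove the inequality by localising at each vertex of $\Gamma$ and exploiting the elementary fact that, for fixed $d$, the function $f(x) = \min\{x, d-x\}$ underlying the effective degree in Definition~\ref{def:boundary} is $1$-Lipschitz. First I would observe that, since the $\Omega_n$ are all similar in the sense of Definition~\ref{def:subgraph-similar}, the canonical bijections between their boundary points preserve both the local degrees $\deg_{\Omega_n}$ and the underlying vertices of $\Gamma$, hence the effective degrees; thus $|\partial\Omega_n|$ is in fact independent of $n$, and it suffices to bound $|\partial\Omega|$ by this common value. Next, by Definition~\ref{def:subgraph-convergence} every sequence of similar boundary points of $\Omega_n$ is Cauchy and converges to some point $v$, which we may take to be a vertex of $\Gamma$ (possibly of degree two) lying in $\partial\Omega$. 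Grouping the finitely many boundary points of $\Omega_n$ by their limit $v$, and noting that every descendant in $\partial\Omega$ is a descendant of such a limit vertex, it is enough to prove the \emph{local} inequality
\[
    \sum_{v_l \text{ desc.\ of } v \text{ in } \Omega} \EffDeg_\Omega(v_l)
    \;\leq\;
    \sum_{\substack{w \in \partial\Omega_n \\ w \to v}} \EffDeg_{\Omega_n}(w)
\]
for each such $v$, and then to sum over the (finitely many) limit vertices.

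To establish the local inequality I would fix $v$, write $d = \deg_\Gamma(v)$, and for $n$ large classify each of the $d$ half-edges at $v$ by the germ of $\Omega_n$ there; by similarity, whether $v \in \Omega_n$ is independent of $n$. If $v \notin \Omega_n$, each half-edge is either attached to $v$ in the limit $\Omega$ — in which case $\Omega_n$ carries a cut point on that edge sliding into $v$, contributing a boundary point of effective degree $1$ — or it is not; writing $P$ for the set of half-edges of the former type, the right-hand side equals $|P|$, while the left-hand side is $\sum_l \min\{|P_l|, d-|P_l|\} \leq \sum_l |P_l| = |P|$ for any partition $P = P_1 \sqcup \cdots \sqcup P_m$ into descendants of $v$ in $\Omega$. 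If instead $v \in \Omega_n$, each half-edge is of one of three types: \emph{in} (attached to $v$ in both $\Omega_n$ and $\Omega$), \emph{shrink} (attached in $\Omega_n$ by a segment collapsing to $v$, hence carrying a sliding cut and absent from $\Omega$), or \emph{detached}. Denoting the in- and shrink-edges by $I$ and $S$, the right-hand side equals $|S| + \sum_k \min\{|Q_k|, d-|Q_k|\}$, where the $Q_k$ are the descendant groups of $v$ in $\Omega_n$; grouping the left-hand side by the $Q_k$ and applying the $1$-Lipschitz bound $f(|Q_k \cap I|) \leq |Q_k \cap S| + f(|Q_k|)$ term by term yields the claim.

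The main obstacle, and the step requiring genuine care, is the connectivity bookkeeping that legitimises the grouping in the second case: one must show that the descendant structure of $v$ in $\Omega$ is precisely that of $\Omega_n$ after deleting the shrink-edges, i.e.\ that the in-edges lying in a common $\Omega_n$-descendant $Q_k$ remain adjacent at $v$ in $\Omega$ (so that $Q_k \cap I$ is a single descendant $P_l$, with no spurious splitting), and that in-edges from distinct $Q_k$ do not merge in the limit. Both facts follow from the distance convergence $\dist_{\Omega_n}(x_n,y_n) \to \dist_\Omega(x,y)$ built into Definition~\ref{def:subgraph-convergence}: two in-edges joined at $v$ by arbitrarily short paths in $\Omega_n$ remain so joined in $\Omega$ (as $v \in \Omega$), whereas edges separated in $\Omega_n$ cannot acquire a short connection in the limit. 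Once this is in place, the remaining points are routine — checking that each boundary point of $\Omega_n$ near $v$ is counted exactly once, and that every sliding cut, being an interior-edge cut of a degree-two point, contributes effective degree exactly $1$ — after which summing the local inequalities over $v$ completes the proof.
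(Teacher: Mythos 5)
Your proof is correct and follows essentially the same route as the paper's: both argue locally at each boundary vertex $v$ of $\Gamma$, pair the sliding cut points (each of effective degree $1$) with the descendants of $v$, and conclude via the inequality $\min\{a,\,d-a\} \leq s + \min\{a+s,\,d-a-s\}$, which is precisely your $1$-Lipschitz bound for $x \mapsto \min\{x,\,d-x\}$. The only small omission is that your trichotomy in/shrink/detached for the case $v \in \Omega_n$ misses half-edges whose germ at $v$ is absent from $\Omega_n$ but which carry a boundary point of $\Omega_n$ sliding into $v$ from outside, so that they survive in $\Omega$ as a new degree-one descendant of $v$; since such a half-edge contributes exactly $1$ to each side of your local inequality, the argument is unaffected.
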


First note that, since the $\Omega_n$ are similar, their boundaries are always in bijection, and thus $|\partial\Omega_n|$ is necessarily constant in $n$. However, since $\Omega$ does not have to be similar to $\Omega_n$, strict inequality in Lemma~\ref{lem:subgraph-convergence} is possible. In Figure~\ref{fig:mutant-3-cut}, $\widetilde{\Omega}_{2,n} \to \Omega_2$; clearly $|\partial\widetilde{\Omega}_{2,n}| = 5$, while $|\partial\Omega_2| = 1$.

\begin{proof}
It suffices to show that, with any vertex $w \in \partial\Omega$ with $\EffDeg_\Omega (w) = d$, we can associate (i.e.\ map) uniquely a set of vertices in $\partial\Omega_n$ whose effective degrees in $\Omega_n$ sum to at least $d$. In what follows we will not distinguish between edges in $\Gamma$ and their canonical image in a subgraph $\Omega_n$ or $\Omega$.

Let $v \in \VertexSet (\Gamma) \cap \partial\Omega$ and let $\tilde v$ be an arbitrary descendant of $v$ in $\Omega$.

If $\deg_\Gamma (v) = 2$, that is, $v$ is in the interior of an edge, then by definition of convergence there exists a unique sequence $v_n \in \partial\Omega_n$ which, when treated as points in $\Gamma$, satisfy $v_n \to v$. In particular, $v_n \in \partial\Omega_n$ with $\EffDeg_{\Omega_n}(v_n) = \EffDeg_{\Omega}(\tilde v) = 1$. This yields the desired association.

The same argument works if $\deg_\Gamma (v) \geq 3$ but $\EffDeg_\Omega (\tilde v) = 1$, since in this case, for $n$ large enough either $\tilde v$ is a boundary vertex of $\Omega_n$ of degree $1$, or there exists a unique $v_n \in \partial\Omega_n$ in the unique edge of $\Gamma$ incident with $\tilde v$ in $\Omega$, which necessarily converges to $v$ as $n \to \infty$.

So assume that $\deg_\Gamma (v) = k \geq 3$ and $\deg_\Omega (\tilde v) = d \geq 2$, so that $\EffDeg_\Omega (\tilde v) = \min \{d, k-d\}$. Now by definition of convergence, all $d$ edges incident with $\tilde v$ in $\Omega$ are also in $\Omega_n$ for $n$ large enough, so that $\deg_{\Omega_n} (\tilde v) \geq d$ (where in a slight abuse of notation we also use write $\tilde v$ for the descendant of $v$ in $\Omega_n$ which corresponds to $\tilde v$ in $\Omega$).

Consider the remaining $d-k$ edges, call them $e_1,\ldots,e_{k-d}$, incident with $v$ in $\Gamma$ which are not incident with $\tilde v$ in $\Omega$. In each case, the assumption of the convergence of $\Omega_n$ to $\Omega$ implies that, for $n$ large enough, for each $i$ either there is a unique boundary point $v_{n_i}$ of $\partial\Omega_n$ in the interior of $e_i$, which converges to $v$ in $\Gamma$ as $n\to \infty$, or else $\interior e_i \cap \Omega_n = \emptyset$. Say there are $d_1 \leq d$ of the first kind, and so $k-d-d_1$ of the second kind.

Each of the first kind is a boundary vertex of $\Omega_n$ of effective degree $1$ which we associate with $\tilde v$ in $\Omega$; we then have $\EffDeg_{\Omega_n}(\tilde v) = \min \{d+d_1,k-d-d_1 \}$, and so the total of the effective degrees of boundary vertices in $\Omega_n$ which will converge to $\tilde v$ in $\Omega$ is
\begin{displaymath}
    d_1 + \min \{d+d_1,k-d-d_1 \} = \min \{d+d_2,k-d \} \geq \min \{d, k-d\} = \EffDeg_\Omega (\tilde v).
\end{displaymath}
This establishes the desired association for $\tilde v \in \partial\Omega$, completing the proof.
\end{proof}

The notion of similarity of subgraphs extends naturally to partitions.

\begin{definition}
\label{def:partition-convergence}
Let $\Gamma$ be a fixed connected, compact metric graph.
\begin{enumerate}
\item[\textbf{(a)}] Two $k$-partitions $\partition_1 = (\Omega_{1,1},\ldots,\Omega_{1,k})$, $\partition_2 = (\Omega_{2,1},\ldots,\Omega_{2,k})$ of $\Gamma$ are \emph{similar} if, up to a renumbering of the $\Omega_{2,j}$, $\Omega_{1,j}$ is similar to $\Omega_{2,j}$ for all $j=1,\ldots,k$.

\item[\textbf{(b)}] If $\partition_n = (\Omega_{n,1},\ldots,\Omega_{n,k})$ are similar for all $n \in \N$, then we write $\partition_n \to \partition$ for some partition $\partition = (\Omega_1,\ldots,\Omega_k)$ if each $\Omega_{n,j}$ has a limit $\Omega_j$.

\item[\textbf{(c)}] If, in (b), $|\Omega_{n,j}| \to 0$, in which case $\Omega_j$ is necessarily a finite set, then we say $\Omega_{n,j}$ is \emph{vanishing}; in this case we exclude $\Omega_j$ from the limit partition $\partition$.
\end{enumerate}
\end{definition}

Partition convergence means, in particular, that, up to a suitable ordering of the partition elements, for each $j=1,\ldots,k$ each $\Omega_{n,j}$ converges in the natural Hausdorff metric to the limit $\Omega_j$. The possibility of the ``vanishing'' case (c) means that, in general, the limit of a sequence of $k$-partitions will be an $m$-partition for some $m \leq k$. (Recall that we assume that all elements of a partition should be subgraphs with nonempty interiors, to avoid trivialities.)

As noted in \cite{KeKuLeMu21}, similarity is an equivalence relation, and on any compact graph, among $k$-partitions only finitely many equivalence classes (or \emph{configuration classes}) are possible; it follows easily that \emph{any} sequence of $k$-partitions admits a convergent subsequence (although we repeat that the limit will be an $m$-partition for some $m \leq k$).

\subsection{Proof of Theorem~\ref{thm:cheeger}}
\label{sec:cheeger-proof}

By \cite[Theorem~3.13]{KeKuLeMu21}, it suffices to show that $\energyc$ has the following two properties:
\begin{enumerate}
    \item Lower semicontinuity: $\energyc (\partition) \leq \liminf_{n \to \infty} \energyc (\partition_n)$ whenever $\partition_n \to \partition$ in the sense described in Section~\ref{sec:convergence-description}; and
    \item ``Coercivity'': $\energyc (\partition_n) \to \infty$ whenever there is a sequence of partition elements $\Omega_n \in \partition_n$ such that $|\Omega_n| \to 0$.
\end{enumerate}
(Note that showing these same two properties for suitably adjusted functionals also immediately yields existence of a minimiser for the problems described in Remark~\ref{rem:cheeger-p-type}, as well as the alternative definition of Cheeger partitions discussed in Appendix~\ref{app:cheeger-alt}.)

We note that (2) is immediate from the definition of $\energyc$, since $|\partial\Omega_n| \geq 1$ for any subgraphs $\Omega_n \subset \Gamma$ for which $|\Omega_n| \to 0$ (and thus which do not exhaust $\Gamma$).

For (1), it suffices to show that if $\Omega_n \to \Omega$ in the sense of Section~\ref{sec:convergence-description} and $\Omega$ has nonzero total length (the non-vanishing case), then
\begin{equation}
\label{eq:cheeger-lsc}
    \frac{|\partial\Omega|}{|\Omega|} \leq \liminf_{n\to\infty} \frac{|\partial\Omega_n|}{|\Omega_n|}.
\end{equation}
But this follows directly from the definition of convergence and Lemma~\ref{lem:subgraph-convergence}.

Hence \cite[Theorem~3.13]{KeKuLeMu21} is applicable, and we have existence of a minimising partition.

\section{Robin spectral minimal partitions}
\label{sec:robin-smp}

Having dealt with Cheeger partitions, we will now introduce the other main object of study of the paper, partitions built out of Robin Laplace eigenvalues. To that end we first need to introduce the eigenvalues themselves, and our notation for them; we will also do the same for their Dirichlet counterparts. We can then formulate our main theorems, Theorems~\ref{thm:robin-existence} and~\ref{thm:robin-limit}.

In keeping with our notion of effective degree (Definition~\ref{def:boundary}), and thus to be consistent with our definition of the Cheeger constant, we will define the ground state Robin energy of a domain $\Omega \subset \Gamma$, with boundary parameter (or coupling constant) $\alpha > 0$, as follows:

\begin{definition}
Let $\Omega$ be a subgraph of $\Gamma$. Given $\alpha > 0$ we define the Robin ground state energy of $\Omega$ with boundary parameter $\alpha$ by
\begin{equation}
\label{eq:robinlambda}
    \robinlambda[\alpha] (\Omega) := \inf\left\{\frac{\int_\Omega |f'(x)|^2\,\dx + \sum_{v \in \partial\Omega} \alpha \EffDeg (v) |f(v)|^2}{\int_\Omega |f(x)|^2\,\dx} : 0 \neq f \in H^1(\Omega) \right\}.
\end{equation}
\end{definition}
Here the space $H^1 (\Omega) = \{f \in C(\Omega): f|_e \in H^1(e)\, \forall e \in \EdgeSet (\Omega)\}$ is defined in the usual way (but note that continuity of functions is with respect to the topology of $\Omega$ as a closed subgraph and not a subset of $\Gamma$). We will denote the Rayleigh quotient appearing in \eqref{eq:robinlambda} by $R_\alpha [f]$, for given $0 \neq f \in H^1(\Omega)$.

It is well known (see, e.g., \cite[Section~2]{BeKeKuMu19} or \cite{BaScSo22}, and the references therein) that $\robinlambda[\alpha] (\Omega)$ is the smallest eigenvalue of the Laplacian with continuity conditions at all vertices, Neumann--Kirchhoff conditions at all interior vertices of $\Omega$, and a suitable Robin (or delta) condition 
at each vertex in $\partial\Omega$; see Section~\ref{sec:robin} for more details. We also define the corresponding Dirichlet ground state energy
\begin{equation}
\label{eq:rayleigh-quotient-dirichlet}
    \dirichletlambda (\Omega) = \inf\left\{\frac{\int_\Omega |f'(x)|^2\,\dx}{\int_\Omega |f(x)|^2\,\dx} : 0 \neq f \in H^1_0(\Omega) \right\},
\end{equation}
where $H^1_0 (\Omega) = \{ f \in H^1(\Omega): f(v)=0 \,\forall v \in \partial\Omega\}$.

In what follows, all partitions will be as in Definition~\ref{def:partition}.

\begin{definition}
Let $\Gamma$ be a connected, compact metric graph.
\begin{enumerate}
\item[{\bf (a)}] We define the \emph{Robin functional} with parameter $\alpha>0$, $\energyr[\alpha] : \partition \mapsto \R$, on the set of all partitions $\partition$ of $\Gamma$ by
\begin{equation}
\label{eq:energyr}
    \energyr[\alpha] (\partition) = \max_{i=1,\ldots,k} \robinlambda[\alpha] (\Omega_i),
\end{equation}
where $\partition = (\Omega_1,\ldots,\Omega_k)$.
\item[{\bf (b)}] The $k$-minimal Robin energy of $\Gamma$, $k \geq 1$, is given by
\begin{equation}
\label{eq:roptenergy}
    \roptenergy[\alpha]k (\Gamma) = \inf \{\energyr[\alpha] (\partitionk[k]) : \partitionk[k] \text{ is a $k$-partition of $\Gamma$} \}.
\end{equation}
\item[{\bf (c)}] The \emph{Dirichlet functional} $\energyd : \partition \mapsto \R$ is defined on partitions $\partition = (\Omega_1,\ldots,\Omega_k)$ of $\Gamma$ by
\begin{displaymath}
    \energyd (\partition) = \max_{i=1,\ldots,k} \dirichletlambda (\Omega_i).
\end{displaymath}
\item[{\bf (d)}] The $k$-minimal Dirichlet energy of $\Gamma$, $k \geq 2$, is given by
\begin{displaymath}
    \doptenergy[k] (\Gamma) = \inf \{ \energyd (\partitionk[k]) : \partitionk[k] \text{ is a $k$-partition of $\Gamma$} \}.
\end{displaymath}
\end{enumerate}
\end{definition}

The existence, for any compact $\Gamma$ and any $k \in \N$, of a minimal partition for the Dirichlet functional, was proved in \cite[Section~4]{KeKuLeMu21}, although this Dirichlet functional was first studied much earlier, in \cite{BBRS12}, where the authors showed that, under certain smoothness and genericity assumptions, its critical points correspond to eigenfunctions of the standard Laplacian on the whole graph.

We can now formulate our principal theorems, as noted in the introduction. First, we establish existence of a minimal Robin partition for each $\alpha>0$.  Second, we establish convergence of the normalised minimal partition energy values to their Cheeger counterparts as $\alpha \to 0$, together with convergence of the actual minimal partitions up to a subsequence. As $\alpha \to \infty$, the (non-normalised) minimal values and a subsequence of the minimising partitions converge to their Dirichlet counterparts.

\begin{theorem}[Existence and basic properties of minimal Robin partitions]
\label{thm:robin-existence}
Let $\Gamma$ be a connected, compact metric graph.
\begin{enumerate}
\item For each $k\geq 2$ and $\alpha>0$ there exists a $k$-partition $\partitionk[k]^\alpha$ of $\Gamma$ such that
\begin{equation}
\label{eq:roptenergy-attained}
    \roptenergy[\alpha]k (\Gamma) = \energyr[\alpha] (\partitionk[k]^\alpha).
\end{equation}
\item The function $\alpha \mapsto \roptenergy[\alpha]k (\Gamma)$ is a continuous and strictly monotonically increasing function of $\alpha>0$.
\end{enumerate}
\end{theorem}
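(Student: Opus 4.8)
The plan is to handle the two assertions separately, since they draw on rather different machinery. For part (1), I would follow the same strategy already used for the Cheeger functional in Section~\ref{sec:cheeger-proof}, invoking the abstract existence result \cite[Theorem~3.13]{KeKuLeMu21}: it suffices to verify that, for each fixed $\alpha>0$, the functional $\energyr[\alpha]$ is lower semicontinuous with respect to partition convergence (Definition~\ref{def:partition-convergence}) and is ``coercive'' in the sense that $\energyr[\alpha](\partition_n)\to\infty$ whenever some partition element has $|\Omega_n|\to 0$. Coercivity should be immediate, since the test function $f\equiv 1$ in the Rayleigh quotient \eqref{eq:robinlambda} gives $\robinlambda[\alpha](\Omega)\le \alpha\,|\partial\Omega|/|\Omega|$, but a matching lower bound showing blow-up requires that shrinking a subgraph forces $\robinlambda[\alpha]\to\infty$; this should follow from a one-dimensional Poincaré/Faber--Krahn-type estimate (the Robin eigenvalue of a short interval with positive boundary weight blows up as the length tends to zero), which is exactly the kind of elementary monotonicity result promised in Section~\ref{sec:robin}. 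Lower semicontinuity is the more delicate half: the subtlety is that the limit $\Omega$ of a sequence $\Omega_n$ may have \emph{different} boundary connectivity, so I would want $\liminf_n \robinlambda[\alpha](\Omega_n)\ge \robinlambda[\alpha](\Omega)$, using that the boundary weight $\sum_{v\in\partial\Omega}\alpha\EffDeg(v)|f(v)|^2$ behaves well under the effective-degree bookkeeping of Lemma~\ref{lem:subgraph-convergence} (namely that effective degree can only drop in the limit, which \emph{helps} lower semicontinuity of the eigenvalue), combined with the convergence of Dirichlet energies and $L^2$ norms on the converging edges.

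For part (2), the monotonicity in $\alpha$ is the easy direction: for each fixed subgraph $\Omega$, the Rayleigh quotient $R_\alpha[f]$ is pointwise (in $f$) nondecreasing in $\alpha$, indeed strictly increasing unless $f$ vanishes on all of $\partial\Omega$, so $\alpha\mapsto\robinlambda[\alpha](\Omega)$ is nondecreasing; taking the maximum over partition elements and then the infimum over partitions preserves monotonicity, giving that $\alpha\mapsto\roptenergy[\alpha]k(\Gamma)$ is nondecreasing. To upgrade to \emph{strict} monotonicity I would argue that for $\alpha_1<\alpha_2$, any optimal (or near-optimal) partition $\partitionk[k]^{\alpha_2}$ realising $\roptenergy[\alpha_2]k$ has at least one element $\Omega_i$ with $\partial\Omega_i\neq\emptyset$ whose ground state does not vanish on $\partial\Omega_i$ (a genuinely nonempty boundary exists because at most one partition element can have empty boundary, and with $k\ge 2$ not all elements can be the whole graph), so $\robinlambda[\alpha_1](\Omega_i)<\robinlambda[\alpha_2](\Omega_i)$, and then $\roptenergy[\alpha_1]k\le\energyr[\alpha_1](\partitionk[k]^{\alpha_2})<\energyr[\alpha_2](\partitionk[k]^{\alpha_2})=\roptenergy[\alpha_2]k$.

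Continuity of $\alpha\mapsto\roptenergy[\alpha]k(\Gamma)$ I would obtain from uniform-in-$\alpha$ Lipschitz-type control of the individual eigenvalues together with the existence of minimisers from part (1). The key estimate is that for a fixed subgraph $\Omega$ the map $\alpha\mapsto\robinlambda[\alpha](\Omega)$ is Lipschitz with a local constant controlled by $|\partial\Omega|$ and the $L^2$ mass of the ground state on $\partial\Omega$ (differentiating the Rayleigh quotient, or bounding $|R_{\alpha_2}[f]-R_{\alpha_1}[f]|\le|\alpha_2-\alpha_1|\,|\partial\Omega|\,|f(v)|^2/\|f\|_2^2$ uniformly); since there are only finitely many configuration classes, these constants can be made uniform over the relevant partitions, and continuity of the infimum-of-maxima then follows by a standard sandwiching argument comparing $\roptenergy[\alpha]k$ and $\roptenergy[\beta]k$ through a common competitor partition. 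The main obstacle I anticipate is the lower semicontinuity in part~(1): carefully tracking how the boundary term in \eqref{eq:robinlambda} transforms when boundary vertices split into several descendants in the limit (as in Figure~\ref{fig:subgraph-convergence}), and confirming that this splitting never \emph{decreases} the limiting eigenvalue below $\robinlambda[\alpha](\Omega)$, is exactly where the effective-degree definition does its work and where the argument must be made airtight.
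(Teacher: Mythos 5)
Your overall architecture coincides with the paper's: part (1) is proved by verifying lower semicontinuity and coercivity so that \cite[Theorem~3.13]{KeKuLeMu21} applies (the paper gets coercivity from a Nicaise-type symmetrisation bound, Lemma~\ref{lem:robin-volume-divergence}, exactly the one-dimensional Faber--Krahn estimate you anticipate, and gets lower semicontinuity from Theorem~\ref{thm:continuity-monotonicity}, i.e.\ monotone convergence of eigenvalues as edges shrink with the $\delta$-strengths adding at the merged vertex, rather than your direct variational bookkeeping --- your mechanism, that the effective degree can only drop in the limit, is the right one, but the paper routes it through edge-length continuity). Part (2) likewise proceeds via monotonicity of the Rayleigh quotient plus a Lipschitz estimate in $\alpha$. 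However, there is one genuine gap in your continuity argument. The natural bound you propose, $|R_{\alpha_2}[f]-R_{\alpha_1}[f]|\leq|\alpha_2-\alpha_1|\sum_{v\in\partial\Omega}\EffDeg_\Omega(v)|f(v)|^2/\|f\|_2^2$, yields (via the eigenfunction and concavity, cf.\ Remark~\ref{rem:derivative-bound}) the derivative bound $\frac{\d}{\d\alpha}\robinlambda[\alpha](\Omega)\leq|\partial\Omega|/|\Omega|$, and this constant blows up as $|\Omega|\to 0$. Your claim that the constants ``can be made uniform over the relevant partitions'' because there are only finitely many configuration classes does not close this: within a single configuration class the volume of a partition element can be arbitrarily small, so finiteness of the classes gives no lower bound on $|\Omega_i^\alpha|$. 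The paper supplies the missing ingredient as Lemma~\ref{lem:optimal-partition-derivative-alpha}: comparing a minimising partition with the test partition obtained by cutting a longest edge into $k$ equal pieces, and invoking the Nicaise inequality \eqref{eq:nicaise-robin}, forces $|\Omega_i^{\alpha_0}|\geq\ell_{\max}/(2k)$ for every element of a minimiser, whence a Lipschitz constant $C(\Gamma,k)$ independent of $\alpha$, after which your sandwiching argument (and the paper's) goes through. Without such a volume lower bound for minimisers --- or an alternative uniform bound --- the left-continuity step fails.

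A second, minor and easily repaired, point: in your strict monotonicity argument, exhibiting \emph{one} element $\Omega_i$ with $\partial\Omega_i\neq\emptyset$ and $\robinlambda[\alpha_1](\Omega_i)<\robinlambda[\alpha_2](\Omega_i)$ does not by itself give $\energyr[\alpha_1](\partitionk[k]^{\alpha_2})<\energyr[\alpha_2](\partitionk[k]^{\alpha_2})$, since the maximum in $\energyr[\alpha_2]$ could be attained at a different element whose eigenvalue you have not shown to decrease strictly. The fix is immediate: since $\Gamma$ is connected and $k\geq 2$, \emph{every} element of a $k$-partition has nonempty boundary, and by Lemma~\ref{lem:basic} its ground state is strictly positive on all of $\overline{\Omega_i}$, so every $\robinlambda[\alpha](\Omega_i)$ is strictly increasing in $\alpha$ and the maximum strictly decreases as $\alpha$ does. (The paper runs the same idea as a contradiction argument using the strict positivity of the derivative \eqref{eq:robin-derivative-general} at a minimiser.)
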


\begin{theorem}[Limit cases and convergence to Cheeger partitions]
\label{thm:robin-limit}
Let $\Gamma$ be a connected, compact metric graph.
\begin{enumerate}
\item As $\alpha \to 0$,
\begin{equation}
\label{eq:roptenergy-convergence}
    \frac{\roptenergy[\alpha]k (\Gamma)}{\alpha} \to \cheeger[k] (\Gamma).
\end{equation}
Moreover, for any sequence (in $\alpha$) of minimal partitions $\partitionk[k]^\alpha$ with $\alpha \to 0$, there exists a $k$-partition $\partitionk[k]^C$ achieving equality in \eqref{eq:cheegerk} such that, up to a subsequence,
\begin{equation}
\label{eq:ropt-partition-convergence}
    \partitionk[k]^\alpha \to \partitionk[k]^C
\end{equation}
as $\alpha \to 0$, where partition convergence is in the sense described in Definition~\ref{def:partition-convergence} (which in particular implies convergence of each partition element with respect to Hausdorff distance).
\item As $\alpha \to \infty$,
\begin{equation}
\label{eq:roptenergy-dirichlet-convergence}
    \roptenergy[\alpha]k (\Gamma) \to \doptenergy[k] (\Gamma).
\end{equation}
Moreover, up to a subsequence there is convergence of the minimising partitions for $\roptenergy[\alpha]k$ to a minimising partition for $\doptenergy[k]$ in the same sense as in (1).
\end{enumerate}
\end{theorem}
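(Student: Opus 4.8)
The plan is to prove each limit by sandwiching $\roptenergy[\alpha]k(\Gamma)$ between a soft upper bound and a matching lower bound, and then to upgrade energy convergence to partition convergence using the compactness of the space of $k$-partitions: there are only finitely many configuration classes, so any sequence of minimisers admits a subsequence converging in the sense of Definition~\ref{def:partition-convergence} to some limit partition $\partition^*$, a priori an $m$-partition with $m\le k$. The two upper bounds are immediate from test objects. On the Cheeger side, the constant function $f\equiv 1$ gives $R_\alpha[1]=\alpha|\partial\Omega|/|\Omega|$, hence $\robinlambda[\alpha](\Omega)\le\alpha|\partial\Omega|/|\Omega|$; evaluating $\energyr[\alpha]$ at a $k$-Cheeger cut (which exists by Theorem~\ref{thm:cheeger}) then yields $\roptenergy[\alpha]k(\Gamma)\le\alpha\,\cheeger[k](\Gamma)$ for every $\alpha>0$. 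On the Dirichlet side, $H^1_0(\Omega)\subset H^1(\Omega)$ gives $\robinlambda[\alpha](\Omega)\le\dirichletlambda(\Omega)$, and evaluating at a Dirichlet minimal partition (which exists by \cite{KeKuLeMu21}) yields $\roptenergy[\alpha]k(\Gamma)\le\doptenergy[k](\Gamma)$ for every $\alpha>0$.

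For part (1) the upper bound already gives $\roptenergy[\alpha]k(\Gamma)/\alpha\le\cheeger[k](\Gamma)$; I would prove the reverse inequality by contradiction. Suppose that along some $\alpha_n\to0$ the minimisers $\partition^{\alpha_n}=(\Omega_1^{\alpha_n},\ldots,\Omega_k^{\alpha_n})$ satisfy $\roptenergy[\alpha_n]k(\Gamma)/\alpha_n\to c<\cheeger[k](\Gamma)$, and pass to a subsequence with $\partition^{\alpha_n}\to\partition^*$ inside a fixed class. The first key step is to exclude vanishing elements. If $|\Omega_j^{\alpha_n}|\to0$, then normalising the ground state $u_j$ and using $\int_{\Omega_j^{\alpha_n}}|u_j'|^2\le\robinlambda[\alpha_n](\Omega_j^{\alpha_n})\le\alpha_n\cheeger[k](\Gamma)$ forces the oscillation of $u_j$ to be $O(\sqrt{|\Omega_j^{\alpha_n}|\,\alpha_n})\to0$, while $\|u_j\|_\infty^2\ge1/|\Omega_j^{\alpha_n}|\to\infty$; evaluating the boundary term at a boundary vertex $v_0$ then gives $\robinlambda[\alpha_n](\Omega_j^{\alpha_n})/\alpha_n\ge|u_j(v_0)|^2\to\infty$, contradicting the upper bound. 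Hence no element vanishes and $\partition^*$ is a genuine $k$-partition. The second key step is the uniform lower refinement $\robinlambda[\alpha](\Omega)\ge\alpha(|\partial\Omega|/|\Omega|)(1-C\sqrt\alpha)$, valid with $C$ uniform over a configuration class whose edge lengths stay bounded below (supplied by Section~\ref{sec:robin}); combined with the lower semicontinuity of the Cheeger quotient from Lemma~\ref{lem:subgraph-convergence} and $|\Omega_i^{\alpha_n}|\to|\Omega_i^*|$, this gives $\liminf_n\robinlambda[\alpha_n](\Omega_i^{\alpha_n})/\alpha_n\ge|\partial\Omega_i^*|/|\Omega_i^*|$ for each $i$. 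Taking the maximum over $i$ yields $c\ge\energyc(\partition^*)\ge\cheeger[k](\Gamma)$, the desired contradiction. Running the same chain of estimates with the now-established $\roptenergy[\alpha]k(\Gamma)/\alpha\to\cheeger[k](\Gamma)$ shows that every convergent subsequence of minimisers has a limit $\partition^*$ with $\energyc(\partition^*)=\cheeger[k](\Gamma)$, i.e. a $k$-Cheeger cut, which is exactly \eqref{eq:ropt-partition-convergence}.

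For part (2), monotonicity of $\alpha\mapsto\roptenergy[\alpha]k(\Gamma)$ from Theorem~\ref{thm:robin-existence} together with the Dirichlet upper bound shows $\roptenergy[\alpha]k(\Gamma)$ increases to a limit $\ell\le\doptenergy[k](\Gamma)$, and it remains to prove $\ell=\doptenergy[k](\Gamma)$. Take minimisers $\partition^\alpha$ and a subsequence $\partition^\alpha\to\partition^*$ in a fixed class. Vanishing is excluded here by a different mechanism: for fixed $\alpha_0$, the mediant inequality applied to $R_{\alpha_0}[f]$ on a subgraph of total length $L$ carrying a boundary vertex gives $\robinlambda[\alpha_0](\Omega)\ge\tfrac{1}{2L}\min\{1/L,\alpha_0\}\to\infty$ as $L\to0$, so a vanishing $\Omega_j^\alpha$ would force $\robinlambda[\alpha](\Omega_j^\alpha)\ge\robinlambda[\alpha_0](\Omega_j^\alpha)\to\infty$, contradicting the bound $\le\doptenergy[k](\Gamma)$; thus $\partition^*$ is a $k$-partition. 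Now fix $\alpha_0$: for $\alpha\ge\alpha_0$ pointwise monotonicity gives $\robinlambda[\alpha](\Omega_i^\alpha)\ge\robinlambda[\alpha_0](\Omega_i^\alpha)$, while lower semicontinuity of $\robinlambda[\alpha_0]$ under subgraph convergence (Section~\ref{sec:robin}) gives $\liminf_\alpha\robinlambda[\alpha_0](\Omega_i^\alpha)\ge\robinlambda[\alpha_0](\Omega_i^*)$; letting $\alpha_0\to\infty$ and using $\robinlambda[\alpha_0](\Omega_i^*)\nearrow\dirichletlambda(\Omega_i^*)$ yields $\liminf_\alpha\robinlambda[\alpha](\Omega_i^\alpha)\ge\dirichletlambda(\Omega_i^*)$. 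Taking the maximum over $i$ gives $\ell\ge\energyd(\partition^*)\ge\doptenergy[k](\Gamma)$, hence equality throughout; moreover $\energyd(\partition^*)=\doptenergy[k](\Gamma)$ identifies $\partition^*$ as a Dirichlet minimal partition, giving the asserted partition convergence.

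The main obstacle is the uniform lower refinement of the expansion used in part (1). The pointwise asymptotics $\robinlambda[\alpha](\Omega)=\alpha|\partial\Omega|/|\Omega|+o(\alpha)$ is by itself insufficient, because the minimising partitions move with $\alpha$, so the error must be controlled uniformly across a configuration class; this is precisely the monotonicity-and-continuity package for the Robin eigenvalue established in Section~\ref{sec:robin}, and it is here that the one-dimensional nature of the graph is essential. The remaining subtlety is the systematic exclusion of vanishing partition elements, which in both limits is what guarantees that $\partition^*$ has the full $k$ elements and may therefore be compared against $\cheeger[k](\Gamma)$ and $\doptenergy[k](\Gamma)$, rather than against the constants for some $m<k$; the two blow-up estimates above handle this uniformly.
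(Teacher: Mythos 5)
Your overall architecture (test-object upper bounds, compactness of the finitely many configuration classes, exclusion of vanishing elements, elementwise $\liminf$ lower bounds) matches the paper's, and several pieces are correct and even a little cleaner: the constant test function gives the exact inequality $\robinlambda[\alpha](\Omega)\le\alpha|\partial\Omega|/|\Omega|$ rather than an expansion with error term; your oscillation argument for excluding vanishing elements is a legitimate self-contained substitute for the paper's appeal to the Nicaise-type bound in Lemma~\ref{lem:robin-volume-divergence}; and your treatment of part (2) --- freeze $\alpha_0$, use $\robinlambda[\alpha](\Omega_i^\alpha)\ge\robinlambda[\alpha_0](\Omega_i^\alpha)$ for $\alpha\ge\alpha_0$, pass to the domain limit at fixed $\alpha_0$ via Theorem~\ref{thm:continuity-monotonicity}, then let $\alpha_0\to\infty$ --- is a valid alternative to the paper's exterior-approximation construction for that part.

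The genuine gap is at the heart of part (1). Your lower bound rests on the ``uniform lower refinement'' $\robinlambda[\alpha](\Omega)\ge\alpha\tfrac{|\partial\Omega|}{|\Omega|}(1-C\sqrt{\alpha})$ with $C$ uniform over a configuration class, which you assert is ``supplied by Section~\ref{sec:robin}''. It is not: that section provides only the pointwise expansion \eqref{eq:robin-at-0} on a \emph{fixed} subgraph (with an error constant depending on the subgraph), monotonicity and continuity in the edge lengths, and, in Remark~\ref{rem:derivative-bound}, an estimate in the \emph{opposite} direction (an upper bound for the derivative, hence for $\robinlambda[\alpha]/\alpha$). Since the minimisers move with $\alpha$, this is precisely the point you identify as the main obstacle, and you resolve it by citation to a result that does not exist in the paper. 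The paper's actual mechanism is different: it forms the single fixed exterior approximation $\Omega_i^\varepsilon=\overline{\bigcup_{0<\alpha<\alpha_\varepsilon}\Omega_i^\alpha}$, uses domain monotonicity (Theorem~\ref{thm:continuity-monotonicity}) to get $\robinlambda[\alpha](\Omega_i^\alpha)\ge\robinlambda[\alpha](\Omega_i^\varepsilon)$, and then applies the pointwise expansion to the one fixed graph $\Omega_i^\varepsilon$, so that no uniform error estimate is ever needed. Your claimed inequality is in fact true and provable --- normalise the ground state $u$, observe that $\|u'\|_2^2\le\alpha|\partial\Omega|/|\Omega|$ forces $u$ to lie within $O(\sqrt{\alpha})$ of the constant $|\Omega|^{-1/2}$ in $L^\infty$, with constants controlled by upper and lower bounds on $|\Omega|$, and then bound the boundary term of the Rayleigh quotient from below --- but as written the step is asserted rather than proved, so the proposal does not close without either supplying that estimate or switching to the paper's exterior-approximation argument.
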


\begin{remark}
If we define the size of the boundary $|\partial\Omega|$ simply as $\# \{v \in \Gamma : v \in \partial\Omega\}$, then the factor $\EffDeg (v)$ in \eqref{eq:robinlambda} should be replaced by $1$ (just as with the Cheeger constant); in this case, Theorem~\ref{thm:robin-limit}, suitably adjusted, will continue to hold, with the same proof.
\end{remark}

\begin{remark}
\label{rem:robin-p-type}
As with the Cheeger problem (cf.~Remark~\ref{rem:cheeger-p-type} and Appendix~\ref{app:cheeger-alt}), existence can be proved for (exhaustive or non-exhaustive) $k$-partitions minimising $p$-type functionals of the form
\begin{displaymath}
    \energyr[\alpha]_p (\partition) := \left[\sum_{i=1}^k \robinlambda[\alpha] (\Omega_i)^p \right]^{1/p}.
\end{displaymath}
Theorem~\ref{thm:robin-limit} should continue to hold, \textit{mutatis mutandis}, for these more general functionals, with an easily modified proof. We will not consider these functionals further, and do not go into details.
\end{remark}

\begin{remark}
If $\alpha<0$, then $\robinlambda[\alpha](\Omega) < 0$ is always bounded from above but not necessarily from below; the natural optimisation problem equivalent to \eqref{eq:roptenergy} becomes
\begin{displaymath}
    \sup_{\partition = (\Omega_1,\ldots,\Omega_k)} \min \{\robinlambda[\alpha] (\Omega_1), \ldots, \robinlambda[\alpha] (\Omega_k)\} \leq 0,
\end{displaymath}
or equivalently its negative,
\begin{displaymath}
    \inf_{\partition = (\Omega_1,\ldots,\Omega_k)} \max \{-\robinlambda[\alpha] (\Omega_1), \ldots, -\robinlambda[\alpha] (\Omega_k)\} \geq 0.
\end{displaymath}
Since the key expansion \eqref{eq:robin-at-0} (namely \eqref{eq:robin-expansion-at-zero-domains} for graphs) is two-sided, it should be possible to show that the latter quantity, when normalised by $\alpha$, converges to $\cheeger[k] (\Gamma)$ as $\alpha \nearrow 0$, with a corresponding statement holding for $p$-sums as in Remark~\ref{rem:robin-p-type}. The proofs should be a fairly easy generalisation of ours, but we will not explore this question further here.
\end{remark}

\section{Properties of the Robin Laplacian on metric subgraphs}
\label{sec:robin}

In this section we will collect some basic properties of the quantity \eqref{eq:robinlambda}, including well-posedness, properties of $\robinlambda[\alpha] (\Omega)$ in dependence on $\Omega$ for fixed $\alpha$, and properties in dependence on $\alpha$ for fixed $\Omega$. For the general framework, instead of considering $\EffDeg (v)\alpha$ we will consider a general boundary potential $\beta (v)$ which may depend on the vertex; thus in our concrete applications $\beta(v) = \EffDeg (v) \alpha$. To avoid confusion, wherever possible we will use $\beta$ for the general case, and $\EffDeg (v)\alpha$, or just $\alpha$, for the particular choice $\beta(v) = \EffDeg (v) \alpha$; however, in a slight abuse of notation, we will write $\robinlambda[\beta] (\Omega)$ for the ordered eigenvalues of the former, and $\robinlambda[\alpha] (\Omega)$ for the ordered eigenvalues of the latter.

\subsection{General framework and basic properties}

Given a connected subgraph $\Omega$ of $\Gamma$ with boundary $\partial\Omega$ consisting of $V$ distinct vertices in $\Omega$, for $\beta =\beta(v) \in \R^{V}$ we define the bilinear form $\robinform[\beta] : H^1(\Omega)\times H^1(\Omega) \to \R$ by
\begin{equation}
\label{eq:robinform}
    \robinform[\beta](f,g)=\int_{\Omega} f'(x)g'(x) \dx +\sum_{v \in \partial\Omega} \beta(v) f(v)g(v).
\end{equation}
It is well known (see, e.g., \cite[Sections~1.4.4, 2.5 and 3.1]{BeKu13}, \cite[Section~2]{BeKeKuMu19} or \cite{BaScSo22}) that the operator on $L^2(\Omega)$ associated with $\robinform[\beta]$ is the Laplacian satisfying a continuity condition at all vertices, a Robin condition
\begin{displaymath}
    \sum_{\substack{e \sim v\\ e \subset \Omega}} \partial_\nu f|_e (v)
    = \beta(v) f(v)
\end{displaymath}
at each boundary vertex $v \in \partial\Omega$ (where $\partial_\nu f|_e(v)$ is the derivative of $f|_e$ at $v$ pointing away from $v$, and only edges $e$ incident with $v$ in the subgraph $\Omega$ are counted) and Kirchhoff conditions, corresponding to $\beta=0$, at all internal vertices of $\Omega$.
We also speak of a \emph{$\delta$-potential} of strength $\beta(v)$ at $v$. The following basic properties are also well known (again, see \cite{BaScSo22,BeKeKuMu19,BeKu13} among many others; simplicity of the first eigenvalue and strict positivity of a corresponding eigenfunction are discussed in \cite{Ku19}).

\begin{lemma}
\label{lem:basic}
The operator on $L^2(\Omega)$ associated with \eqref{eq:robinform} is self-adjoint, coercive up to a multiple of the identity on $L^2$, and has compact resolvent. In particular, its eigenvalues form a sequence of real numbers
\begin{displaymath}
    \lambda_1 < \lambda_2 \leq \lambda_3 \leq \ldots \to \infty,
\end{displaymath}
repeated according to their finite multiplicities. The first eigenvalue $\lambda_1 = \robinlambda[\beta] (\Omega)$ is simple, admits an eigenfunction strictly positive in $\Omega = \overline{\Omega}$, and may be characterised by
\begin{equation}
\label{eq:rayleigh-quotient}
    \robinlambda[\beta] (\Omega) = \inf_{0\neq f\in H^1(\Omega)} \frac{\robinform[\beta](f,f)}{\|f\|_2^2} = 
    \inf_{0\neq f\in H^1(\Omega)} \frac{\int_\Omega |f'(x)|^2\,{\rm d}x
    +\sum_{v \in \partial\Omega} \beta(v)f(v)^2}{\int_\Omega |f(x)|^2\,{\rm d}x}.
\end{equation}
It is strictly positive if $\beta(v)>0$ for at least one $v \in \partial\Omega$.
\end{lemma}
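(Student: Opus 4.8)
The plan is to verify each assertion of Lemma~\ref{lem:basic} in turn, treating the form-theoretic properties first and then extracting spectral consequences via the standard machinery for semibounded self-adjoint operators with compact resolvent. The underlying point is that $\robinform[\beta]$ differs from the Neumann--Kirchhoff form only by the finite-rank boundary term $\sum_{v \in \partial\Omega} \beta(v) f(v)^2$, which is a lower-order perturbation controlled by the trace of $H^1(\Omega)$ functions at finitely many points.

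First I would establish that $\robinform[\beta]$ is symmetric (immediate from \eqref{eq:robinform}) and closed and bounded below on $H^1(\Omega)$. Since $\Omega$ is compact and one-dimensional, point evaluation $f \mapsto f(v)$ is bounded on $H^1(\Omega)$, and in fact for every $\delta>0$ there is $C_\delta$ with $|f(v)|^2 \leq \delta \|f'\|_2^2 + C_\delta \|f\|_2^2$ (an interpolation/trace estimate on the edge containing $v$). Using this with $\delta$ small absorbs the boundary term into the Dirichlet energy, giving coercivity of $\robinform[\beta] + C\|\cdot\|_2^2$ on $H^1(\Omega)$ for suitable $C \geq 0$; this is exactly the ``coercive up to a multiple of the identity'' claim. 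The closedness follows because $\robinform[\beta]+C\|\cdot\|_2^2$ defines a norm equivalent to the $H^1$ norm, and $H^1(\Omega)$ is complete. By the representation theorem for closed semibounded forms, the associated operator is self-adjoint and bounded below. Compactness of the resolvent then follows from the compact embedding $H^1(\Omega) \hookrightarrow L^2(\Omega)$ (Rellich, valid since $\Omega$ is a finite metric graph), which is standard. This immediately yields the discrete spectrum $\lambda_1 \leq \lambda_2 \leq \cdots \to \infty$ with finite multiplicities and the min--max/Rayleigh characterisation \eqref{eq:rayleigh-quotient}, the $k=1$ case being precisely the stated infimum.

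For simplicity of $\lambda_1$ and positivity of the ground state, I would invoke a Perron--Frobenius / Courant-type argument: the ground state minimiser $f$ may be replaced by $|f|$ without increasing the Rayleigh quotient (since $\big||f|'\big| = |f'|$ a.e.\ and $|f(v)|^2$ is unchanged), so there is a nonnegative minimiser; a strong maximum principle argument on each edge, together with the continuity and Kirchhoff/Robin vertex conditions, forces $f>0$ throughout $\Omega=\overline{\Omega}$, and strict positivity rules out orthogonal second ground states, giving simplicity. These facts are cited to \cite{Ku19} and I would reference them rather than reprove them. Finally, strict positivity of $\robinlambda[\beta](\Omega)$ when some $\beta(v)>0$: a nonpositive ground state energy would force, via \eqref{eq:rayleigh-quotient}, a minimiser with $f'\equiv 0$ (hence $f$ constant, as $\Omega$ is connected) and $f(v)=0$ at every boundary vertex with $\beta(v)>0$; a nonzero constant cannot vanish at any point, so no such minimiser exists and $\robinlambda[\beta](\Omega)>0$.

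The main obstacle I expect is the trace/interpolation estimate underpinning coercivity: one must make quantitative the bound $|f(v)|^2 \leq \delta\|f'\|_2^2 + C_\delta\|f\|_2^2$ uniformly in the relevant edge lengths, since later sections (on continuity of $\robinlambda[\alpha]$ in the edge lengths and uniform control of error terms as $\alpha \to 0$) will need it to be stable. On a single interval this is the elementary estimate $|f(x_0)|^2 \leq \varepsilon \int |f'|^2 + (\tfrac{1}{\varepsilon}+\tfrac{2}{\ell})\int |f|^2$ or similar, but tracking the dependence on the edge length $\ell$ (which may degenerate) is the delicate point; everything else is a routine application of standard form theory and Perron--Frobenius arguments for which the paper already supplies references.
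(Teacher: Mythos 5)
The paper offers no proof of this lemma at all: it is stated as ``well known'' with references to \cite{BaScSo22,BeKeKuMu19,BeKu13} for the form-theoretic and spectral assertions and to \cite{Ku19} for simplicity of $\lambda_1$ and strict positivity of the ground state. Your proposal supplies exactly the standard argument that those references contain, and it is correct: the one-point trace estimate $|f(v)|^2 \leq \delta\|f'\|_2^2 + C_\delta\|f\|_2^2$ makes the boundary term form-bounded with relative bound zero, so $\robinform[\beta]$ is closed and semibounded with form domain $H^1(\Omega)$, the representation theorem and the compact embedding $H^1(\Omega)\hookrightarrow L^2(\Omega)$ give a self-adjoint operator with discrete spectrum and the Rayleigh characterisation, and the $|f|$-replacement plus an ODE uniqueness argument at edges and vertices gives positivity and hence simplicity. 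Two small remarks. First, your closing argument for $\robinlambda[\beta](\Omega)>0$ implicitly uses $\beta\geq 0$ everywhere (so that both terms in the numerator must vanish if the quotient is nonpositive); this is the reading the paper intends, since in all applications $\beta(v)=\EffDeg_\Omega(v)\alpha$ with $\alpha>0$, but it is worth stating, as the general framework formally allows $\beta\in\R^V$ and the claim is false if some $\beta(v)<0$. Second, the uniformity of the trace constant in the edge lengths, which you flag as the delicate point, is not needed for this lemma (which concerns a single fixed $\Omega$); the paper handles the degenerating-edge issues later by entirely different means (the surgery and shrinking-edge results of Theorem~\ref{thm:continuity-monotonicity} and the Nicaise-type bound of Lemma~\ref{lem:robin-volume-divergence}), so you need not carry that dependence here.
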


\subsection{Continuity and monotonicity with respect to edge lengths for fixed $\alpha$}

We will be interested in how $\robinlambda[\alpha] (\Omega)$ depends on continuous deformations of $\Omega \subset \Gamma$, which amounts to dependence on the edge lengths. We formulate our result, which will include both continuity and monotonicity, for a family of (connected, compact) metric graphs having a fixed topology, or equivalently a fixed \emph{underlying discrete graph} (see Definition 2.1, and the discussion around it, in \cite{KeKuLeMu21}). We will, however, include the degenerate limit when an edge disappears. While the general conclusion in this case is well known, and in fact known in greater generality (see \cite{BeLaSu19}, or the related works \cite{Bo22,Ca19}), for the sake of completeness 
we will include a precise formulation of the statement in the form we need it.

More precisely, we consider the family $(\Gamma_t)_{t\geq 0}$ of similar graphs, which are identical to each other except for a distinguished edge $e_t$ of length $t \geq 0$. Denote by $v_\beta, v_\gamma$ the incident vertices with $e_t$, for $t>0$, and by $v_{\beta+\gamma}$ the vertex of the limit graph when $t=0$ (i.e. when $e_t$ is eliminated, the respective vertices incident with $e_t$ are ``glued'' together).

\begin{theorem}
\label{thm:continuity-monotonicity}
For the family $(\Gamma_t)_{t\geq 0}$ just described, denote by $\lambda_1^{\beta,\gamma}(\Gamma_t)$ the first eigenvalue with strength $\beta \geq 0$ at $v_\beta$ and strength $\gamma\geq 0$ at $v_\gamma$, and by $\lambda_1^{\beta+\gamma}(\Gamma_0)$ the eigenvalue at $\Gamma_0$ with strength $\beta+\gamma$ at $v_{\beta+\gamma}$; in all cases we assume fixed non-negative $\delta$-potentials at all other vertices. Then
\begin{displaymath}
    \lambda_1^{\beta,\gamma}(\Gamma_t) \nearrow \lambda_1^{\beta+\gamma}(\Gamma_0).
\end{displaymath}
\end{theorem}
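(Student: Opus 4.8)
The plan is to work entirely from the variational characterisation \eqref{eq:rayleigh-quotient} of the first eigenvalue in Lemma~\ref{lem:basic}, treating the monotonicity and the convergence separately. For the monotonicity I will build, from the ground state on a graph with a shorter distinguished edge, a test function on a graph with a longer one whose Rayleigh quotient is no larger; for the convergence I combine a one-line test-function upper bound with a compactness argument for the matching lower bound. Throughout I write $u_t > 0$ for the (simple, strictly positive) ground state on $\Gamma_t$, normalised in $L^2$.

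\emph{Monotonicity.} Fix $0 < s < t$ and let $u_s$ be the ground state on $\Gamma_s$. Parametrise the distinguished edge of $\Gamma_s$ as $[0,s]$ with $0 \leftrightarrow v_\beta$, and define $\tilde u$ on $\Gamma_t$ to equal $u_s$ on every other edge, while on $e_t \cong [0,t]$ we set $\tilde u(x) = u_s(v_\beta)$ for $x \in [0,t-s]$ and $\tilde u(x) = u_s(x-(t-s))$ for $x \in [t-s,t]$. Then $\tilde u \in H^1(\Gamma_t)$ is continuous, takes the same values as $u_s$ at $v_\beta$, $v_\gamma$ and at every other boundary vertex, and has the same total Dirichlet energy (the inserted constant piece contributes nothing); hence the numerator of \eqref{eq:rayleigh-quotient} is unchanged, while the insertion of length $t-s$ and height $u_s(v_\beta)>0$ strictly increases the denominator. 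Thus the Rayleigh quotient of $\tilde u$ on $\Gamma_t$ is at most $\lambda_1^{\beta,\gamma}(\Gamma_s)$, giving $\lambda_1^{\beta,\gamma}(\Gamma_t) \le \lambda_1^{\beta,\gamma}(\Gamma_s)$, strictly so whenever the ground state energy is positive (i.e. whenever some potential is positive). So $\lambda_1^{\beta,\gamma}(\Gamma_t)$ increases as $t \searrow 0$.

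\emph{Upper bound for the limit.} Let $u_0$ be the ground state on $\Gamma_0$ and $c_0 := u_0(v_{\beta+\gamma})$. Extend it to $\Gamma_t$ by keeping $u_0$ off $e_t$ (with the edges formerly at $v_{\beta+\gamma}$ now distributed between $v_\beta$ and $v_\gamma$) and setting $\tilde u \equiv c_0$ on $e_t$. This is continuous, its Dirichlet energy is unchanged, and the boundary term $\beta c_0^2 + \gamma c_0^2 = (\beta+\gamma)c_0^2$ matches the one at $v_{\beta+\gamma}$; only the denominator grows, by $c_0^2 t$. Hence $\lambda_1^{\beta,\gamma}(\Gamma_t) \le \lambda_1^{\beta+\gamma}(\Gamma_0)$ for every $t>0$, so together with the monotonicity the limit $L := \lim_{t \to 0^+}\lambda_1^{\beta,\gamma}(\Gamma_t)$ exists and satisfies $L \le \lambda_1^{\beta+\gamma}(\Gamma_0)$.

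\emph{Lower bound (the main obstacle).} It remains to show $L \ge \lambda_1^{\beta+\gamma}(\Gamma_0)$, and this is where the real work lies, since one cannot restrict a function on $\Gamma_t$ to a legitimate test function on $\Gamma_0$ without first knowing that its values at $v_\beta$ and $v_\gamma$ agree in the limit. From $\int_{\Gamma_t}|u_t'|^2 \le \lambda_1^{\beta,\gamma}(\Gamma_t) \le \lambda_1^{\beta+\gamma}(\Gamma_0)$ one gets a uniform $H^1$ bound, and the key estimate is that, by Cauchy--Schwarz on the shrinking edge, $|u_t(v_\beta) - u_t(v_\gamma)| \le \sqrt{t}\,(\int_{e_t}|u_t'|^2)^{1/2} \to 0$, so the jump across the gluing vanishes. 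Taking $t_n \to 0$ with $\lambda_1^{\beta,\gamma}(\Gamma_{t_n}) \to L$, the uniform bound and the one-dimensional compact embedding $H^1 \hookrightarrow C$ let me extract a limit $u$, weakly in $H^1$ and uniformly, on the common part of the graph; the vanishing jump forces $u$ to be continuous across $v_{\beta+\gamma}$, so $u \in H^1(\Gamma_0)$, and no $L^2$ mass is lost on $e_{t_n}$, whence $\|u\|_{L^2(\Gamma_0)} = 1$. Finally, weak lower semicontinuity of the Dirichlet energy together with convergence of the boundary values (so that $\beta u_{t_n}(v_\beta)^2 + \gamma u_{t_n}(v_\gamma)^2 \to (\beta+\gamma)u(v_{\beta+\gamma})^2$) yields $\lambda_1^{\beta+\gamma}(\Gamma_0) \le Q^{\Gamma_0}_{\beta+\gamma}(u,u) \le \liminf_n \lambda_1^{\beta,\gamma}(\Gamma_{t_n}) = L$. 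Combining the two bounds gives $L = \lambda_1^{\beta+\gamma}(\Gamma_0)$, which with the monotonicity is exactly the claimed $\lambda_1^{\beta,\gamma}(\Gamma_t) \nearrow \lambda_1^{\beta+\gamma}(\Gamma_0)$. The technical care all sits here: ensuring the $L^2$ normalisation survives the limit and that the boundary weight does not escape onto the collapsing edge.
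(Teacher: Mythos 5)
Your argument is correct, but it takes a genuinely different route from the paper. The paper disposes of the theorem by citation: convergence of $\lambda_1^{\beta,\gamma}(\Gamma_t)$ to $\lambda_1^{\beta+\gamma}(\Gamma_0)$ is quoted from the shrinking-edge limit theory of \cite{BeLaSu19}, and monotonicity is obtained by inserting dummy vertices in $e_t$ and invoking the Hadamard formula for the derivative of a simple eigenvalue with respect to the length of a Neumann--Kirchhoff edge (\cite{BaLe17,BeKeKuMu19}). You instead work entirely with the Rayleigh quotient \eqref{eq:rayleigh-quotient}: the insertion of a constant plateau at height $u_s(v_\beta)>0$ is a clean surgery-type argument that leaves the numerator fixed and inflates the denominator, giving monotonicity; the same trick applied to the ground state of $\Gamma_0$ gives the upper bound $\lambda_1^{\beta,\gamma}(\Gamma_t)\le\lambda_1^{\beta+\gamma}(\Gamma_0)$; and the lower bound comes from compactness, with the Cauchy--Schwarz estimate $|u_t(v_\beta)-u_t(v_\gamma)|\le\sqrt{t}\,\bigl(\int_{e_t}|u_t'|^2\bigr)^{1/2}$ doing exactly the work needed to recover continuity at the glued vertex. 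What your approach buys is self-containedness and transparency --- it uses nothing beyond Lemma~\ref{lem:basic} --- at the cost of length; what the paper's approach buys is brevity and, via the Hadamard formula, an explicit positive derivative (hence strict monotonicity whenever the eigenvalue is nonconstant), whereas your construction gives only weak monotonicity in the degenerate case where all potentials vanish, which is in fact all that is true there. Two small points worth a line each if you write this up: the assertion that no $L^2$ mass is lost on $e_{t_n}$ needs the uniform sup bound $\|u_{t_n}\|_{L^\infty(e_{t_n})}\le |u_{t_n}(v_\beta)|+\sqrt{t_n}\,C$ (with $|u_{t_n}(v_\beta)|$ controlled by the uniform $H^1$ bound on a fixed incident edge), and the weak-$H^1$/uniform extraction should be phrased edgewise on the fixed edges of $\Gamma\setminus e_t$, since the ambient graph changes with $t$.
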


Note that $\beta$ and $\gamma$ are allowed to be zero, and $v_\beta$ and $v_\gamma$ are allowed to have degree two; thus Theorem~\ref{thm:continuity-monotonicity} also covers the special case of shortening an edge without making it disappear in the limit; take $\gamma=0$ and $v_\gamma$ as a dummy vertex at a suitable position in the edge being shortened, as in Figure~\ref{fig:continuity}. In particular, we obtain a ``local'' continuity and monotonicity statement: assuming constant strengths $\alpha$, shortening any edge raises the first eigenvalue continuously and monotonically.
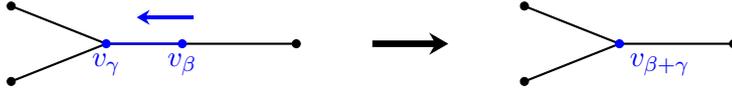
\begin{figure}[H]
\begin{tikzpicture}[scale=1]
\draw[thick] (2,0) -- (4.5,0);
\node at (3,0) [anchor=north] {{\color{blue}$v_\beta$}};
\node at (2,0) [anchor=north] {{\color{blue}$v_\gamma$}};
\draw[thick] (2,0) -- (0.75,0.5);
\draw[thick] (2,0) -- (0.75,-0.5);
\draw[fill] (0.75,0.5) circle (1.5pt);
\draw[fill] (0.75,-0.5) circle (1.5pt);
\draw[fill,blue] (2,0) circle (1.5pt);
\draw[fill,blue] (3,0) circle (1.5pt);
\draw[thick,blue] (2,0) -- (3,0);
\draw[fill] (4.5,0) circle (1.5pt);
\draw[-{Stealth[scale=0.5,angle'=60]},blue,line width=1.5pt] (3.15,0.35) -- (2.4,0.35);
\draw[-{Stealth[scale=0.5,angle'=60]},line width=2.5pt] (5.5,0) -- (6.5,0);
\draw[thick] (8.75,0) -- (10.25,0);
\node at (8.75,0) [anchor=north west] {{\color{blue}$v_{\beta+\gamma}$}};
\draw[thick] (8.75,0) -- (7.5,0.5);
\draw[thick] (8.75,0) -- (7.5,-0.5);
\draw[fill] (7.5,0.5) circle (1.5pt);
\draw[fill] (7.5,-0.5) circle (1.5pt);
\draw[fill,blue] (8.75,0) circle (1.5pt);
\draw[fill] (10.25,0) circle (1.5pt);
\end{tikzpicture}
\caption{Continuity of $\robinlambda[\beta,\gamma]$ with respect to edge lengths as a special case of Theorem~\ref{thm:continuity-monotonicity}. Here $\gamma$ may be taken equal to zero, in which case $v_\gamma$ is a dummy vertex and we recover a ``standard'' continuity result.}\label{fig:continuity}
\end{figure}

\begin{proof}
We first note that there is convergence of the eigenvalues in the situation described in the statement; this is an immediate consequence of the continuity property contained in \cite[Theorem~3.5 and Proposition~3.6]{BeLaSu19} (where their Lemma 3.4, and hence Condition 3.2, are clearly satisfied in our case). For monotonicity, it suffices to note that for any strictly positive $t>0$ the derivative $\frac{\d}{\d t} (\lambda_1^{\beta,\gamma}(\Gamma_t)$ (exists and) is positive. To see this, we fix $t>0$ and insert two dummy vertices $v_-$ and $v_+$ in the interior of $e_t$, to create a new edge entirely contained in the original $e_t$, which has Neumann--Kirchhoff vertex conditions at both endpoints. It now follows from the well-known Hadamard formula for the derivative of a simple eigenvalue with respect to the edge length of an edge with Neumann--Kirchhoff conditions (see \cite[Lemma~5.2]{BaLe17} or \cite[Remark~3.14]{BeKeKuMu19} and the references therein, and note that the proof in \cite{BaLe17} carries over with trivial adjustments to our case) that indeed $\frac{\d}{\d t} (\lambda_1^{\beta,\gamma}(\Gamma_t)$ is positive.
\end{proof}

We finish this subsection with a statement that we will need for the existence result, Theorem~\ref{thm:robin-existence}(1) (here we return to the case of fixed $\alpha$, although the statement holds more generally): if $\alpha>0$, $\robinlambda[\alpha] (\Omega_n) \to \infty$ whenever $|\Omega_n| \to 0$. We will formulate a slightly stronger version of this statement, which is however a fairly direct consequence of a Faber--Krahn (or Nicaise) type isoperimetric inequality established in \cite{KaKuTK16}.

\begin{lemma}
\label{lem:robin-volume-divergence}
Suppose $\Omega_n$ is a sequence of subgraphs of $\Gamma$ such that $|\Omega_n| \to 0$. Then, for fixed $\alpha > 0$, $\robinlambda[\alpha] (\Omega_n) \to \infty$. Moreover, for any sequence of parameters $\alpha_n > 0$ bounded from above (but $\alpha_n \to 0$ is allowed), the quotient $\frac{\lambda_1^{\alpha_n}(\Omega_n)}{\alpha_n} \to \infty$.
\end{lemma}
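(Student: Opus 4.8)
The plan is to reduce everything to a single explicit, \emph{$\alpha$-uniform} lower bound: for every proper connected subgraph $\Omega \subset \Gamma$ of total length $L := |\Omega|$ and every $\alpha > 0$,
\[
   \robinlambda[\alpha](\Omega) \;\geq\; \min\left\{\frac{1}{2L^2},\ \frac{\alpha}{2L}\right\}.
\]
Both assertions then drop out by inspection. This can certainly be obtained from the Faber--Krahn/Nicaise inequality of \cite{KaKuTK16}, by comparison with an interval of length $L$ carrying a single Robin endpoint of strength $\alpha$ (whose ground state energy solves $k\tan(kL)=\alpha$, so behaves like $\alpha/L$ as $L\to 0$). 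However, I would prefer to argue directly via a one-dimensional Poincar\'e-type estimate, since this keeps the dependence on $\alpha$ completely transparent, which is exactly what the second statement needs.

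The key step is a pointwise control of a test function by its boundary value. Since $|\Omega_n|\to 0 < |\Gamma|$, for large $n$ the subgraph $\Omega_n$ is proper and hence has nonempty boundary; fix any $v_0 \in \partial\Omega$, and note $\EffDeg(v_0)\geq 1$. For arbitrary $0 \neq f \in H^1(\Omega)$ and any $x \in \Omega$, integrating $f'$ along a shortest path from $v_0$ to $x$ (of length at most $L$) and applying Cauchy--Schwarz gives $|f(x)| \leq |f(v_0)| + \sqrt{L}\,(\int_\Omega |f'|^2\,\dx)^{1/2}$; squaring and integrating over $\Omega$ yields the denominator estimate
\[
   \int_\Omega |f(x)|^2\,\dx \;\leq\; 2L\,|f(v_0)|^2 + 2L^2\int_\Omega |f'(x)|^2\,\dx .
\]
For the numerator of the Rayleigh quotient in Lemma~\ref{lem:basic} I would keep only the contribution at $v_0$, using $\sum_{v\in\partial\Omega}\alpha\EffDeg(v)|f(v)|^2 \geq \alpha|f(v_0)|^2$.

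Writing $a := \int_\Omega|f'|^2\,\dx$ and $b := |f(v_0)|^2$, the quotient is then bounded below by $(a+\alpha b)/(2L^2 a + 2L b)$, a ratio of two positive linear forms in $(a,b)$; being a convex combination of the two coefficient ratios, its infimum over the quadrant $a,b\geq 0$ is the smaller of them, namely $\min\{1/(2L^2),\,\alpha/(2L)\}$. Taking the infimum over $f$ gives the displayed bound. For fixed $\alpha$ and $L=|\Omega_n|\to 0$ both terms diverge, which is the first claim; for $\alpha=\alpha_n\in(0,M]$ one gets $\robinlambda[\alpha_n](\Omega_n)/\alpha_n \geq \min\{1/(2ML_n^2),\,1/(2L_n)\}\to\infty$, which is the second. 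The one point demanding care---and what I view as the crux---is precisely this uniformity in $\alpha$: as $\alpha_n\to 0$ the eigenvalue itself collapses to the Neumann value $0$, so the divergence of the \emph{quotient} $\robinlambda[\alpha_n]/\alpha_n$ rests entirely on the Poincar\'e constant above depending on $L$ alone, which the two-term minimum makes explicit.
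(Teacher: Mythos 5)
Your proposal is correct, and it takes a genuinely different route from the paper. The paper's proof invokes the symmetrisation (Nicaise/Faber--Krahn type) inequality of \cite{KaKuTK16} to compare $\robinlambda[\alpha](\Omega)$ with the first eigenvalue of an interval of length $|\Omega|$ carrying one Robin and one Neumann endpoint, and then quotes an explicit two-sided estimate for that interval eigenvalue from \cite{FrKe21} to obtain the bound $\robinlambda[\alpha](\Omega) \geq \frac{2\alpha\pi^2}{2|\Omega|(\pi^2+4\alpha|\Omega|)}$, from which both claims follow exactly as in your final paragraph. You instead prove a self-contained Poincar\'e--trace inequality by integrating $f'$ along a shortest (hence injective, hence length at most $|\Omega|$) path from a boundary vertex, and then bound the Rayleigh quotient below by the minimum of the two coefficient ratios of the resulting linear forms; this yields $\min\{1/(2L^2),\,\alpha/(2L)\}$, which has the same qualitative shape as the paper's bound and, crucially, the same uniformity in $\alpha$ needed for the second assertion. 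Your argument is more elementary (no external isoperimetric input), at the cost of a worse constant; the paper's is shorter given the cited results and identifies the sharp comparison object. One remark: your path argument, like the paper's reduction, implicitly uses that $\Omega_n$ is connected (so that every point can be joined to the chosen boundary vertex $v_0$); this is the standing assumption for partition elements in the paper, so it is not a gap, but it is worth stating explicitly since the lemma as phrased only says ``subgraphs''.
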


\begin{proof}
We first claim that, given $\alpha>0$, for any subgraph $\Omega \subset \Gamma$ for which $\partial\Omega \neq \emptyset$,
\begin{equation}
\label{eq:robin-useful-bound}
    \robinlambda[\alpha] (\Omega) \geq \frac{2\alpha\pi^2}{2|\Omega|(\pi^2 + 4\alpha|\Omega|)} \to \infty.
\end{equation}
To prove this, we first observe that since $\Omega \neq \Gamma$ it must have at least one Robin vertex. We may without loss of generality replace $\EffDeg_\Omega (v)$ by $1$ for all $v \in \partial\Omega$ since this can only reduce the eigenvalue. It thus follows from \cite[Theorem~1]{KaKuTK16} that
\begin{equation}
\label{eq:nicaise-robin}
    \robinlambda[\alpha] (\Omega) \geq \robinlambda[(\alpha,0)] (I),
\end{equation}
where $I$ is an interval of length $|\Omega|$, and $\robinlambda[(\alpha,0)](I)$ is its first eigenvalue with a Robin coupling of strength $\alpha$ at one endpoint and Neumann conditions at the other. By symmetry, this eigenvalue, in turn, is equal to the first eigenvalue of an interval of length $2|\Omega|$ with a Robin condition of strength $\alpha$ at both endpoints. Using known estimates for the latter eigenvalue (see \cite[Eq.~(A5)]{FrKe21}) leads to \eqref{eq:robin-useful-bound}.

Now the first statement of the lemma follows immediately from \eqref{eq:robin-useful-bound}, since if $|\Omega_n| \to 0$ (which in particular means $\partial\Omega_n \neq \emptyset$ for $n$ large enough) then the right-hand side of \eqref{eq:robin-useful-bound}, applied to $\Omega=\Omega_n$, diverges to $\infty$. Similarly, if both $\Omega_n$ and $\alpha_n$ are varying, but the $\alpha_n$ are bounded, then dividing both sides of \eqref{eq:robin-useful-bound} by $\alpha=\alpha_n$ yields
\begin{displaymath}
    \frac{\robinlambda[\alpha_n] (\Omega_n)}{\alpha_n} \geq \frac{2\pi^2}{2|\Omega_n|(\pi^2 + 4\alpha_n|\Omega_n|)} \to \infty
\end{displaymath}
as $|\Omega_n| \to 0$.
\end{proof}

\subsection{Properties of the Robin eigenvalue in dependence on $\alpha$}
\label{sec:robin-dependence-alpha}

We next collect some background results about the behaviour of $\robinlambda[\alpha](\Omega)$ as a function of the real parameter $\alpha \in \R$ on a fixed subgraph $\Omega$. These properties are very well known in the case of domains (e.g., \cite{AnFrKe13} or \cite{BuFrKe17}, among many others). The proofs are essentially unchanged on metric graphs, and versions of have appeared elsewhere (see \cite[Section~3.1]{BeKu13} or \cite[Section~2]{BaScSo22}); for the sake of completeness we include the proof of the result in the form in which we will need it.

\begin{lemma}
\label{lem:robin-asymptotic-zero}
Let $\Omega \subset \Gamma$ be a fixed, connected subgraph of $\Gamma$. The function $\alpha \mapsto \robinlambda[\alpha] (\Omega)$ (as defined by \eqref{eq:robinlambda}) is an analytic, strictly monotonically increasing, and concave function of $\alpha \in \R$. At $\alpha=0$ it admits the expansion
\begin{equation}
\label{eq:robin-at-0}
    \robinlambda[\alpha] (\Omega) = \alpha \frac{|\partial\Omega|}{|\Omega|} + \mathcal{O} (\alpha^2),
\end{equation}
while, as $\alpha \to \infty$,
\begin{equation}
\label{eq:robin-at-infinity}
    \robinlambda[\alpha] (\Omega) \to \dirichletlambda (\Omega)
\end{equation}
from below.
\end{lemma}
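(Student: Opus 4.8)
The plan is to exploit the affine dependence of the Rayleigh quotient on $\alpha$. Writing $a(f) := \int_\Omega |f'(x)|^2\dx$ and $b(f) := \sum_{v \in \partial\Omega} \EffDeg(v)|f(v)|^2 \geq 0$, the characterisation \eqref{eq:robinlambda} reads $\robinlambda[\alpha](\Omega) = \inf_{0 \neq f \in H^1(\Omega)} \frac{a(f) + \alpha\, b(f)}{\|f\|_2^2}$. For each fixed $f$ the quotient is an affine, nondecreasing function of $\alpha$ (its slope $b(f)/\|f\|_2^2$ being nonnegative), so $\robinlambda[\alpha](\Omega)$, as a pointwise infimum of affine functions, is automatically concave and nondecreasing on all of $\R$. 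Analyticity I would obtain from Kato's perturbation theory: the forms share the $\alpha$-independent domain $H^1(\Omega)$ and depend affinely, hence holomorphically, on $\alpha$, so they constitute a holomorphic family of type (a); since $\robinlambda[\alpha](\Omega)$ is a simple, isolated eigenvalue for every real $\alpha$ by Lemma~\ref{lem:basic}, it is real-analytic in $\alpha$, with an analytically varying normalised eigenfunction $\psi_\alpha$.

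For strict monotonicity and the expansion at $\alpha=0$ I would use the Feynman--Hellmann first-variation formula $\frac{\d}{\d\alpha}\robinlambda[\alpha](\Omega) = b(\psi_\alpha)/\|\psi_\alpha\|_2^2$, obtained by differentiating $Q_\alpha(\psi_\alpha,\psi_\alpha) = \robinlambda[\alpha](\Omega)\|\psi_\alpha\|_2^2$ and using that $\psi_\alpha$ is a critical point. Since $\psi_\alpha > 0$ on all of $\overline{\Omega}$ (again Lemma~\ref{lem:basic}) and $\partial\Omega \neq \emptyset$ whenever $\Omega \neq \Gamma$, one has $b(\psi_\alpha) > 0$, giving strict monotonicity; the degenerate case $\Omega = \Gamma$, where $\robinlambda[\alpha]\equiv 0$, is trivial. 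At $\alpha = 0$ the operator is the Neumann--Kirchhoff Laplacian, so $\robinlambda[0](\Omega) = 0$ with constant normalised eigenfunction $\psi_0 \equiv |\Omega|^{-1/2}$; hence $b(\psi_0) = |\Omega|^{-1}\sum_{v}\EffDeg(v) = |\partial\Omega|/|\Omega|$ by Definition~\ref{def:boundary}(c). Analyticity then yields the Taylor expansion $\robinlambda[\alpha](\Omega) = \alpha\,\frac{|\partial\Omega|}{|\Omega|} + \mathcal{O}(\alpha^2)$, which is \eqref{eq:robin-at-0}.

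For the limit \eqref{eq:robin-at-infinity}, convergence \emph{from below} is the easy half: any $f \in H^1_0(\Omega)$ has $b(f) = 0$, so testing \eqref{eq:robinlambda} against such $f$ gives $\robinlambda[\alpha](\Omega) \leq \dirichletlambda(\Omega)$ for every $\alpha$; with monotonicity this shows that $L := \lim_{\alpha\to\infty}\robinlambda[\alpha](\Omega)$ exists and satisfies $L \leq \dirichletlambda(\Omega)$. The harder half, the matching bound $L \geq \dirichletlambda(\Omega)$, is where I expect the main obstacle to lie, and I would handle it by compactness. Normalising $\|\psi_\alpha\|_2 = 1$, the inequality $a(\psi_\alpha) \leq \robinlambda[\alpha](\Omega) \leq \dirichletlambda(\Omega)$ keeps $\psi_\alpha$ bounded in $H^1(\Omega)$, while $\alpha\, b(\psi_\alpha) \leq \dirichletlambda(\Omega)$ forces $b(\psi_\alpha) \to 0$, i.e.\ $\psi_\alpha(v) \to 0$ at every $v \in \partial\Omega$. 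Extracting a subsequence converging weakly in $H^1(\Omega)$ and strongly in $L^2(\Omega)$ (the embedding being compact on the compact graph $\Omega$), the limit $\psi$ has $\|\psi\|_2 = 1$, hence $\psi \neq 0$; and since the compact embedding $H^1(e) \hookrightarrow C(\overline{e})$ on each edge makes vertex evaluation continuous under weak $H^1$-convergence, $\psi(v) = 0$ at each boundary vertex, so $\psi \in H^1_0(\Omega)$. Weak lower semicontinuity of $a$ then gives $\dirichletlambda(\Omega) \leq a(\psi) \leq \liminf a(\psi_\alpha) \leq L$, and combining the two inequalities yields $L = \dirichletlambda(\Omega)$. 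The one delicate point is justifying that the \emph{vertex values} pass to the limit, rather than merely the $L^2$-traces, which is exactly where the one-dimensional structure — the compact embedding into continuous functions on each edge — is indispensable.
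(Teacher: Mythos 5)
Your proposal is correct, and for most of the lemma it follows the same route as the paper: concavity and weak monotonicity as an infimum of affine functions of $\alpha$, analyticity from Kato's theory of holomorphic families of type (a) together with simplicity of the ground state, and the expansion \eqref{eq:robin-at-0} from the Feynman--Hellmann derivative evaluated at $\alpha=0$ on the constant eigenfunction. Two points differ. First, for strict monotonicity you apply the derivative formula at every $\alpha$, using that the ground state is strictly positive on $\overline{\Omega}$ and that $\partial\Omega\neq\emptyset$; the paper instead deduces strict increase near $\alpha=0$ from \eqref{eq:robin-at-0} and then propagates it by analyticity. Your version is the more direct one (and you rightly flag that the case $\partial\Omega=\emptyset$, i.e.\ $\Omega=\Gamma$, must be excluded for strictness, which the paper leaves implicit). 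Second, and more substantially, for the limit $\alpha\to\infty$ the paper only cites the elementary variational argument and then gives a different proof via the Berkolaiko--Kuchment analytic dependence of the spectrum on the vertex-condition parameter $\gamma=\arctan\alpha$, which buys analytic convergence of eigenvalues and spectral projections up to and including the Dirichlet endpoint. You instead carry out the elementary argument in full: the upper bound $\lambda_1^\alpha(\Omega)\le\lambda_1^D(\Omega)$ from $H^1_0\subset H^1$, and the matching lower bound by extracting a weak $H^1$-limit of normalised eigenfunctions, using $\alpha\,b(\psi_\alpha)\le\lambda_1^D(\Omega)$ to force the vertex values to vanish and the compact embedding $H^1(e)\hookrightarrow C(\overline{e})$ to pass those vertex values to the limit. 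This is self-contained and correctly handles the one delicate step (weak continuity of vertex evaluation), at the cost of yielding only the convergence statement rather than the stronger analytic information the paper's route provides.
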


\begin{proof}
The family of forms $\robinform[\alpha]$ defined by \eqref{eq:robinform} is a holomorphic self-adjoint family of type (a) in the sense of \cite[Section VII.4.2]{Ka76}; since $\robinlambda[\alpha]$ is simple (Lemma~\ref{lem:basic}), it thus depends analytically on $\alpha \in \R$ by the results of \cite[Section VII.4.6]{Ka76}. That $\robinlambda[\alpha]$ is weakly monotonically increasing is an immediate consequence of the Rayleigh quotient \eqref{eq:rayleigh-quotient} being a monotonically increasing function of $\alpha$ for each fixed function $0 \neq u \in H^1(\Omega)$; moreover, it is concave as the infimum of a family of affine functions (in the parameter $\alpha$). For \eqref{eq:robin-at-0}, we note that in general
\begin{equation}
\label{eq:robin-derivative-general}
    \lambda'(\alpha) = \frac{\d}{\d \alpha} R_\alpha [\psi] = \frac{\d}{\d \alpha} \frac{\robinform[\alpha] (\psi,\psi)}{\|\psi\|_2^2},
\end{equation}
for any simple eigenvalue $\lambda$ with eigenfunction $\psi$, a formula which in our case reduces to
\begin{displaymath}
    \lambda'(\alpha) = \frac{\sum_{v \in \partial \Omega} \EffDeg_\Omega (v) \psi(v)^2}{\|\psi\|_2^2},
\end{displaymath}
see, e.g., \cite[Proposition~3.1.6]{BeKu13}. This formula, used at $\alpha=0$, where the eigenfunctions associated with $\lambda_1^0 (\Omega) = 0$ are constant, yields
\begin{equation}
\label{eq:robin-derivative-at-zero}
    \frac{\d}{\d\alpha} \robinlambda[\alpha] (\Omega)\Big|_{\alpha=0} = \frac{|\partial\Omega|}{|\Omega|}.
\end{equation}
The formula \eqref{eq:robin-at-0} now follows from the analyticity of $\robinlambda[\alpha]$. Finally, \eqref{eq:robin-at-0} implies that $\robinlambda[\alpha] (\Omega)$ is strictly monotonically increasing in a neighbourhood of $\alpha=0$. Its analyticity implies it is strictly monotonically increasing on $\R$.

The inequality
\begin{equation}
\label{eq:robin-dirichlet-comparison}
    \robinlambda[\alpha] (\Omega) \leq \dirichletlambda (\Omega)
\end{equation}
for any $\alpha \in \R$ follows immediately from the respective variational characterisations \eqref{eq:rayleigh-quotient} and \eqref{eq:rayleigh-quotient-dirichlet} and the inclusion $H^1_0 (\Omega) \subset H^1(\Omega)$. The convergence as $\alpha \to \infty$ is well known on domains, and can be proved by elementary variational means (cf.\ \cite[Section~4.4.1]{BuFrKe17}).

On graphs, it can also be obtained using the powerful analyticity results outlined in \cite[Section~2.5]{BeKu13}. Namely, following the formalism for describing vertex conditions outlined in \cite[Section~1.4]{BeKu13} (see Section 1.4.4 and (1.4.29) there in particular), at a given vertex $v$ we write the Robin condition (for given $\alpha > 0$) in the form $A_v F(v) + B_v F'(v)$, where $F$ is the vector reproducing the values of a given function at the endpoint of each edge incident with $v$, and $A_v$ and $B_v$ are the matrices
\begin{displaymath}
	A_v = \left(\begin{matrix} 1 & -1 & 0 & \ldots & 0 & 0\\ 0 & 1 & -1 & \ldots & 0 & 0\\ \vdots & \vdots & \vdots & \ddots & \vdots & \vdots \\
	0 & 0 & 0 & \ldots & 1 & -1\\ \sin \gamma & 0 & 0 & \ldots & 0 & 0\end{matrix}\right), \qquad
	B_v = \left(\begin{matrix} 0 & 0 & \ldots & 0\\ 0 & 0 & \ldots & 0\\ \vdots & \vdots & \ddots & \vdots\\
	\cos \gamma & \cos \gamma & \ldots & \cos \gamma \end{matrix}\right),
\end{displaymath}
where $\gamma$ is the unique number in $(0,\frac{\pi}{2})$ such that $\alpha = \tan \gamma$. Note that when $\gamma = \frac{\pi}{2}$ we recover Dirichlet conditions, while the matrices are analytic functions of $\gamma \in (0,\frac{\pi}{2}]$ (indeed, of $\gamma \in \R$). Thus the claimed convergence result follows from \cite[Theorem~2.5.4]{BeKu13} (which in fact shows analytic convergence of the eigenvalues and the corresponding spectral projections as functions of $\gamma$).
\end{proof}

\begin{remark}
\label{rem:derivative-bound}
It is an immediate consequence of \eqref{eq:robin-derivative-at-zero} and the concavity of the function $\alpha \mapsto \robinlambda[\alpha] (\Omega)$ that
\begin{displaymath}
    (0 \leq\,)\, \frac{\d}{\d\alpha} \robinlambda[\alpha] (\Omega) \leq \frac{|\partial\Omega|}{|\Omega|}
\end{displaymath}
for any $\alpha>0$ and any connected subgraph $\Omega \subset \Gamma$.
\end{remark}

\section{Existence, continuity and monotonicity: Proof of Theorem~\ref{thm:robin-existence}}
\label{sec:robin-existence}

To prove existence (Theorem~\ref{thm:robin-existence}(1)), as with the proof of Theorem~\ref{thm:cheeger}, it is sufficient to check that the two conditions of \cite[Theorem~3.13]{KeKuLeMu21} are satisfied, which we recall are:
\begin{enumerate}
\item $\energyr[\alpha] (\partition) \leq \liminf_{n\to\infty} \energyr[\alpha] (\partition_n)$ whenever $\partition_n \to \partition$ in the sense of the partition convergence described in Section~\ref{sec:convergence-description};
\item $\energyr[\alpha] (\partition_n) \to \infty$ if there are vanishing $\Omega_n \in \partition_n$, i.e., such that $|\Omega_n| \to 0$.
\end{enumerate}
But the lower semicontinuity property (1) is a direct consequence of Theorem~\ref{thm:continuity-monotonicity}, while the coercivity condition (2) is a direct consequence of the first assertion of Lemma~\ref{lem:robin-volume-divergence}. This immediately yields existence.

We now consider Theorem~\ref{thm:robin-existence}(2). The (weak) monotonicity claim there is an immediate consequence of the monotonicity of the map
\begin{displaymath}
    \alpha \mapsto R_\alpha [f] = \frac{\int_\Omega |f'(x)|^2\dx + \alpha \sum_{v \in \partial\Omega} \EffDeg_\Omega (v) f(v)^2}{\int_\Omega |f(x)|^2\dx}
\end{displaymath}
for each fixed $0 \neq f \in H^1(\Omega)$, together with the definition of $\roptenergy[\alpha]k$ as an inf-max-inf. For the rest, given any $\alpha>0$, denote by
\begin{displaymath}
    \partitionk[k]^\alpha := \{ \Omega_1^\alpha,\ldots,\Omega_k^\alpha \}
\end{displaymath}
a minimising $k$-partition for this $\alpha$.

\begin{lemma}
\label{lem:optimal-partition-derivative-alpha}
With the above notation, given any fixed $\alpha_0>0$ with corresponding minimising partition $\{ \Omega_1^{\alpha_0},\ldots,\Omega_k^{\alpha_0} \}$, for each $i=1,\ldots,k$ we have
\begin{equation}
\label{eq:optimal-partition-derivative-alpha}
    0 < \frac{\d}{\d\alpha} \robinlambda[\alpha] (\Omega_i^{\alpha_0})\Big|_{\alpha=\alpha_0}
    \leq \frac{2k\sum_{v \in \VertexSet(\Gamma)} \deg_\Gamma v}{\ell_{\max}}
\end{equation}
where $\ell_{\max} > 0$ is the length of any longest edge in $\Gamma$.
\end{lemma}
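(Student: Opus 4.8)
The plan is to treat the two inequalities separately, and to reduce the upper bound to an isoperimetric estimate via Remark~\ref{rem:derivative-bound}. For the strict lower bound I would invoke the derivative formula established in the proof of Lemma~\ref{lem:robin-asymptotic-zero}, namely
\[
    \frac{\d}{\d\alpha}\robinlambda[\alpha](\Omega_i^{\alpha_0})\Big|_{\alpha=\alpha_0}
    = \frac{\sum_{v\in\partial\Omega_i^{\alpha_0}}\EffDeg_{\Omega_i^{\alpha_0}}(v)\,\psi(v)^2}{\|\psi\|_2^2},
\]
where $\psi$ is the first eigenfunction at $\alpha=\alpha_0$. Since $k\geq 2$ forces each $\Omega_i^{\alpha_0}\neq\Gamma$, its boundary is nonempty; as $\psi>0$ on $\Omega_i^{\alpha_0}$ by Lemma~\ref{lem:basic} and $\EffDeg\geq 1$ on $\partial\Omega_i^{\alpha_0}$, the numerator is strictly positive, which gives the left-hand inequality. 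By Remark~\ref{rem:derivative-bound} the derivative is bounded above by $|\partial\Omega_i^{\alpha_0}|/|\Omega_i^{\alpha_0}|$, so it remains to bound this numerator from above and the denominator from below.

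The numerator is controlled purely combinatorially. For each vertex $v\in\VertexSet(\Gamma)$, the effective degrees of its descendants in $\Omega_i^{\alpha_0}$ are bounded by their degrees in $\Omega_i^{\alpha_0}$, whose sum counts the edge-ends of $\Gamma$ at $v$ that lie in $\Omega_i^{\alpha_0}$ and is therefore at most $\deg_\Gamma v$. Summing over $v$ yields $|\partial\Omega_i^{\alpha_0}|\leq \sum_{v\in\VertexSet(\Gamma)}\deg_\Gamma v$, independently of $i$ and of the partition.

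The crux is the volume lower bound $|\Omega_i^{\alpha_0}|\geq \ell_{\max}/(2k)$, and this is where optimality enters. First I would bound $\roptenergy[\alpha_0]k(\Gamma)$ from above by an explicit competitor: cut a longest edge $e$, $|e|=\ell_{\max}$, into $k$ consecutive subintervals of length $\ell_{\max}/k$ (discarding the rest of $\Gamma$, which is allowed since partitions need not be exhaustive). Each cut point has effective degree at most $1$, so every piece carries Robin strength $\leq\alpha_0$ at its endpoints, and monotonicity of the Rayleigh quotient in the boundary strength gives $\roptenergy[\alpha_0]k(\Gamma)\leq \robinlambda[(\alpha_0,\alpha_0)](I_{\ell_{\max}/k})$, where $I_L$ denotes an interval of length $L$ with Robin strength $\alpha_0$ at both ends. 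On the other hand, since $\{\Omega_i^{\alpha_0}\}$ is minimising, $\robinlambda[\alpha_0](\Omega_i^{\alpha_0})\leq \roptenergy[\alpha_0]k(\Gamma)$ for each $i$; and reducing the boundary strengths $\EffDeg\cdot\alpha_0\geq\alpha_0$ to $\alpha_0$, then applying the Nicaise-type inequality \eqref{eq:nicaise-robin} and the reflection identity used in the proof of Lemma~\ref{lem:robin-volume-divergence}, gives $\robinlambda[\alpha_0](\Omega_i^{\alpha_0})\geq \robinlambda[(\alpha_0,\alpha_0)](I_{2|\Omega_i^{\alpha_0}|})$. Chaining these and using that $L\mapsto \robinlambda[(\alpha_0,\alpha_0)](I_L)$ is strictly decreasing (Theorem~\ref{thm:continuity-monotonicity}) forces $2|\Omega_i^{\alpha_0}|\geq \ell_{\max}/k$.

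Combining the three estimates yields
\[
    \frac{\d}{\d\alpha}\robinlambda[\alpha](\Omega_i^{\alpha_0})\Big|_{\alpha=\alpha_0}
    \leq \frac{|\partial\Omega_i^{\alpha_0}|}{|\Omega_i^{\alpha_0}|}
    \leq \frac{\sum_{v\in\VertexSet(\Gamma)}\deg_\Gamma v}{\ell_{\max}/(2k)}
    = \frac{2k\sum_{v\in\VertexSet(\Gamma)}\deg_\Gamma v}{\ell_{\max}},
\]
as claimed. I expect the genuine work to lie in the volume lower bound: the degree count and the positivity are essentially immediate from the cited results, whereas the lower bound requires correctly assembling the competitor construction, the two-sided comparison with interval eigenvalues, and the strict monotonicity in the interval length. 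The only point demanding mild care is the behaviour at the endpoints of $e$ when they have low degree, but since lowering a boundary strength only decreases the eigenvalue, this poses no difficulty for the upper bound on $\roptenergy[\alpha_0]k(\Gamma)$.
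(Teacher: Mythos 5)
Your proposal is correct and follows essentially the same route as the paper: positivity via the Hadamard-type derivative formula and strict positivity of the ground state, the upper bound via Remark~\ref{rem:derivative-bound}, the trivial degree count for $|\partial\Omega_i^{\alpha_0}|$, and the volume lower bound $|\Omega_i^{\alpha_0}|\geq \ell_{\max}/(2k)$ obtained by comparing the equal-subdivision test partition of a longest edge against the Nicaise/symmetrisation lower bound \eqref{eq:nicaise-robin}. Your write-up is in fact slightly more explicit than the paper's about the interval comparisons and the low-degree endpoint case, but the argument is the same.
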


While the right-hand side in \eqref{eq:optimal-partition-derivative-alpha} will not be optimal in general, the point is that it is a constant $C(\Gamma,k)$ depending only on the graph $\Gamma$ and $k \geq 2$.

\begin{proof}
Note that $\partial\Omega_i^{\alpha_0} \neq 0$ since $k \geq 2$; hence $\frac{\d}{\d\alpha} \robinlambda[\alpha] (\Omega_i^{\alpha_0}) > 0$ for all $\alpha>0$ by \eqref{eq:robin-derivative-general}, since any first eigenfunction is strictly positive on $\Omega = \overline{\Omega}$ (as noted in Lemma~\ref{lem:basic}). For the upper bound, first note that, since $\partitionk[k]^{\alpha_0}$ is optimal, certainly
\begin{displaymath}
    \robinlambda[\alpha_0] (\Omega_i^{\alpha_0}) \leq \robinlambda[\alpha_0] (I)
\end{displaymath}
for all $i$, where $I$ is an interval of length $\frac{\ell_{\max}}{k}$ (since we can construct a test partition by partitioning the longest edge of $\Gamma$ into $k$ equal pieces). The Nicaise-type inequality \eqref{eq:nicaise-robin} from \cite[Theorem~1]{KaKuTK16}, plus the fact that $\robinlambda[\alpha_0] (I)$ is equal to the first eigenvalue on an interval of length $\frac{\ell_{\max}}{2k}$ with mixed Robin--Neumann conditions, implies that $|\Omega_i^{\alpha_0}| \geq \frac{\ell_{\max}}{2k}$. It now follows from Remark~\ref{rem:derivative-bound} that $\frac{\d}{\d\alpha}\robinlambda[\alpha] (\Omega_i^{\alpha_0})\big|_{\alpha=\alpha_0} \leq \frac{2k|\partial\Omega_i^{\alpha_0}|}{\ell_{\max}}$. We finish by using the trivial estimate $|\partial\Omega_i^{\alpha_0}| \leq \sum_{v \in \VertexSet(\Gamma)} \deg_\Gamma v$.
\end{proof}

It follows from Lemma~\ref{lem:optimal-partition-derivative-alpha} that, for any given $\alpha,\beta > 0$,
\begin{equation}
\label{eq:partition-lipschitz-alpha}
    |\energyr[\beta] (\partitionk[k]^{\alpha}) - \energyr[\alpha] (\partitionk[k]^{\alpha})| \leq C(\Gamma,k)|\beta-\alpha|.
\end{equation}
Continuity of $\alpha \mapsto \roptenergy[\alpha]k (\Gamma)$ now follows easily from \eqref{eq:partition-lipschitz-alpha} and weak monotonicity: first, as the infimum of continuous functions $\roptenergy[\alpha]k (\Gamma)$ is right-continuous. Second, if, for some $\alpha_0$, we have a violation of left-continuity, $\limsup_{\alpha \nearrow \alpha_0} \roptenergy[\alpha]k (\Gamma)=: \Lambda < \roptenergy[\alpha_0]k (\Gamma)$, then for $|\alpha-\alpha_0| < C(\Gamma,k)^{-1}(\roptenergy[\alpha_0]k (\Gamma) - \Lambda)$, it follows from \eqref{eq:optimal-partition-derivative-alpha} that
\begin{displaymath}
    \energyr[\alpha_0] (\partitionk[k]^{\alpha}) \leq \energyr[\alpha] (\partitionk[k]^{\alpha}) + C(\Gamma,k)(\alpha_0-\alpha) < \roptenergy[\alpha_0]k (\Gamma),
\end{displaymath}
a contradiction to the definition of $\roptenergy[\alpha_0]k (\Gamma)$. This establishes continuity.

For strict monotonicity, suppose for a contradiction that
\begin{displaymath}
    \energyr[\alpha] (\partitionk[k]^{\alpha}) = \roptenergy[\alpha]k (\Gamma) = \roptenergy[\alpha_0]k (\Gamma) = \energyr[\alpha_0] (\partitionk[k]^{\alpha_0})
\end{displaymath}
for some $\alpha < \alpha_0$. By \eqref{eq:optimal-partition-derivative-alpha}, we have
$\frac{\d}{\d\alpha}|_{\alpha=\alpha_0} \energyr[\alpha_0] (\partitionk[k]^{\alpha_0}) > 0$, whence $\energyr[\alpha] (\partitionk[k]^{\alpha_0}) < \energyr[\alpha_0] (\partitionk[k]^{\alpha_0}) = \energyr[\alpha] (\partitionk[k]^{\alpha})$, a contradiction to the minimality of $\partitionk[k]^{\alpha}$.

\section{Convergence of minimal Robin partitions: Proof of Theorem~\ref{thm:robin-limit}}
\label{sec:robin-cheeger}

The goal of this section is to prove Theorem~\ref{thm:robin-limit}, that is, to prove the convergence of the $\alpha$-normalised minimal Robin energies (and up to a subsequence, partitions) to their Cheeger counterparts as $\alpha \to 0$, as well as the convergence to optimal Dirichlet partitions as $\alpha \to \infty$.

\subsection{Proof of Theorem~\ref{thm:robin-limit}(1)}

We will divide the proof into three steps:
\begin{enumerate}
\item[\textit{Step 1.}] $\limsup_{\alpha \to 0} \frac{\roptenergy[\alpha]k (\Gamma)}{\alpha} \leq \cheeger[k] (\Gamma)$. This inequality will follow directly from a ``test partition'' argument and \eqref{eq:robin-at-0}.
\item[\textit{Step 2.}] Up to a subsequence in $\alpha$, as $\alpha \to 0$ any sequence of minimal $k$-partitions has a limit which is itself a $k$-partition.
\item[\textit{Step 3.}] $\liminf_{\alpha \to 0} \frac{\roptenergy[\alpha]k (\Gamma)}{\alpha} \geq \cheeger[k] (\Gamma)$, and the limit partition from Step 2 is a minimal Cheeger partition. This step requires a careful approximation argument constructing ``uniform exterior approximations'' to the optimal partition subgraphs and using local domain monotonicity.
\end{enumerate}
Throughout, we will denote by
\begin{displaymath}
    \partitionk[k]^C := \{\Omega_1^C,\ldots,\Omega_k^C\}
\end{displaymath}
a minimal Cheeger $k$-partition of $\Gamma$ and, as before, for each $\alpha>0$, by
\begin{displaymath}
    \partitionk[k]^\alpha := \{\Omega_1^\alpha, \ldots, \Omega_k^\alpha\}
\end{displaymath}
a minimal Robin $k$-partition of $\Gamma$ for the parameter $\alpha$. We will implicitly assume that $\alpha \to 0$ represents an arbitrary sequence of values converging to $0$.
\bigskip

\emph{Step 1.} For each fixed $\alpha>0$, by optimality of $\partitionk[k]^\alpha$, we have
\begin{equation}
\label{eq:energyr-partitionc}
    \energyr[\alpha] (\partitionk[k]^\alpha) \leq \energyr[\alpha] (\partitionk[k]^C).
\end{equation}
Now since $\partitionk[k]^C = \{\Omega_1^C,\ldots,\Omega_k^C\}$ is fixed, we may apply \eqref{eq:robin-at-0} to $\Omega_i^C$ for each $i$: as $\alpha \to 0$,
\begin{displaymath}
    \frac{\robinlambda[\alpha] (\Omega_i^C)}{\alpha} = \frac{|\partial\Omega_i^C|}{|\Omega_i^C|} + \mathcal{O}(\alpha).
\end{displaymath}
Since $k$ is fixed and finite,
\begin{displaymath}
    \frac{\energyr[\alpha] (\partitionk[k]^C)}{\alpha}
    = \frac{\max_i \robinlambda[\alpha] (\Omega_i^C)}{\alpha}
    = \max_i \frac{|\partial\Omega_i^C|}{|\Omega_i^C|} + \mathcal{O}(\alpha)
    \longrightarrow \energyc (\partitionk[k]^C) = \cheeger[k] (\Gamma)
\end{displaymath}
as $\alpha \to 0$. Dividing both sides of \eqref{eq:energyr-partitionc} by $\alpha>0$ and passing to the limit as $\alpha \to 0$ yields
\begin{displaymath}
    \limsup_{\alpha \to 0} \frac{\roptenergy[\alpha]k (\Gamma)}{\alpha} \leq \cheeger[k] (\Gamma).
\end{displaymath}
\bigskip

\emph{Step 2.} Topological compactness of the set of $k$-partitions of $\Gamma$, parametrised by the location of their boundary points (as described in Section~\ref{sec:convergence-description}, see also \cite[Section~3.2]{KeKuLeMu21}), implies that, up to a subsequence in $\alpha$ and possibly a relabelling of the $\Omega_i^\alpha$, for each $i=1,\ldots,k$, either there exists $\Omega_i^0$ of positive measure such that
\begin{displaymath}
    \Omega_i^\alpha \to \Omega_i^0
\end{displaymath}
in the sense of Definition~\ref{def:subgraph-convergence} (the ``convergence case'') or $|\Omega_i^\alpha| \to 0$ (the ``vanishing case'').

In the case of convergence to $\Omega_i^0$, the convergence of the edge lengths implies $|\Omega_i^\alpha| \to |\Omega_i^0|$, while we have $|\partial\Omega_i^0| \leq \liminf_{\alpha\to 0} |\partial\Omega_i^\alpha|$ by Lemma~\ref{lem:subgraph-convergence}. Moreover, the $\Omega_i^0$ necessarily have pairwise disjoint interiors because the same is true of the $\Omega_i^\alpha$.

We claim that the vanishing case cannot occur. Indeed, if for some $i$, $|\Omega_i^\alpha| \to 0$, then by Lemma~\ref{lem:robin-volume-divergence},
\begin{displaymath}
    \frac{\robinlambda[\alpha] (\Omega_i^\alpha)}{\alpha} \to \infty
\end{displaymath}
as $\alpha \to 0$. Since for any fixed partition $\partition = (\Omega_1,\ldots,\Omega_k)$ each quotient $\frac{\robinlambda[\alpha] (\Omega_i)}{\alpha}$ remains bounded as $\alpha \to 0$ by \eqref{eq:robin-at-0}, for $\alpha$ sufficiently small the partition $\partitionk[k]^\alpha$ cannot be minimal.

Summarising, for any sequence of $\alpha \to 0$, there exists a $k$-partition $\partitionk[k]^0 := \{ \Omega_1^0, \ldots, \Omega_1^k \}$ such that, up to a subsequence in $\alpha$, $\partitionk[k]^\alpha \to \partitionk[k]^0$ in the sense of Definition~\ref{def:partition-convergence}, and for each $i$,
\begin{displaymath}
    \frac{|\partial\Omega_i^0|}{|\Omega_i^0|} \leq \liminf_{\alpha \to 0} \frac{|\partial\Omega_i^\alpha|}{|\Omega_i^\alpha|}.
\end{displaymath}
\bigskip

\emph{Step 3.} Let $\alpha\to 0$ be any sequence. We claim that, for any subsequence, up to a further subsequence, if $\partitionk[k]^0$ is the limit $k$-partition from Step 2 for this subsubsequence, then
\begin{equation}
\label{eq:energyc-limit}
    \energyc (\partitionk[k]^0) \leq \liminf_{\alpha \to 0} \frac{\energyr[\alpha] (\partitionk[k]^\alpha)}{\alpha}
\end{equation}
as $\alpha \to 0$. We first show how \eqref{eq:energyc-limit} completes the proof of Theorem~\ref{thm:robin-limit}(1). First, we have that, by definition
\begin{displaymath}
    \cheeger[k] (\Gamma) \leq \energyc (\partitionk[k]^0),
\end{displaymath}
and thus, in combination with Step 1, we see that
\begin{equation}
\label{eq:energyc-joined-limit}
     \limsup_{\alpha \to 0} \frac{\energyr[\alpha] (\partitionk[k]^\alpha)}{\alpha} \leq \cheeger[k] (\Gamma) \leq \energyc (\partitionk[k]^0) \leq  \liminf_{\alpha \to 0} \frac{\energyr[\alpha] (\partitionk[k]^\alpha)}{\alpha},
\end{equation}
where, technically, the latter inequality is to be interpreted in the sense that given any subsequence of our original sequence, it is valid for some subsequence of that subsequence. However, the arbitrariness of the first subsequence implies via a standard and elementary analysis argument that it must hold for the entire sequence. Thus, in particular, there is equality everywhere in \eqref{eq:energyc-joined-limit} as $\alpha \to 0$. Moreover, \eqref{eq:energyc-joined-limit} implies that $\partitionk[k]^0$ is a minimal Cheeger $k$-partition, and we have already shown in Step 2 that up to a subsequence $\partitionk[k]^\alpha \to \partitionk[k]^0$.

Thus it remains to prove \eqref{eq:energyc-limit}. To this end, we assume that we have restricted to a subsequence in $\alpha$ such that, for each $i$, all $\Omega_i^\alpha$ are similar to each other in the sense of Definition~\ref{def:subgraph-similar}. In particular, $|\partial\Omega_i^\alpha| \in \N$ is constant in $\alpha$. We also suppose that, for these $\alpha$, the corresponding partitions $\partitionk[k]^\alpha$ converge to the $k$-partition $\partitionk[k]^0$ as in Step 2.

We fix $\varepsilon>0$, and choose an $\alpha_\varepsilon>0$, to be specified precisely later. We then define, for each $i$, the exterior approximation
\begin{displaymath}
    \Omega_i^\varepsilon := \overline{\bigcup_{0<\alpha<\alpha_\varepsilon} \Omega_i^\alpha},
\end{displaymath}
to be interpreted as a subgraph of $\Gamma$ where the union of the $\Omega_i^\alpha$ as \emph{subgraphs} is taken to preserve the topology of the $\Omega_i^\alpha$ as described in Section~\ref{sec:convergence-description}; for example, if $\Omega_i^\varepsilon$ includes a vertex of $\Gamma$ along at least two different edges, but which is not in any of the $\Omega_i^\alpha$, then these two edges are to be considered not joined at this vertex in $\Omega_i^\varepsilon$, even if they are in $\Gamma$ (also note that the $\Omega_i^\varepsilon$ will not, in general, be disjoint).

This means, in particular, that $|\partial\Omega_i^\varepsilon| = |\partial\Omega_i^\alpha|$ for all $\alpha < \alpha_\varepsilon$ and all $i=1,\ldots,k$. We also choose $\alpha_\varepsilon>0$ such that
\begin{displaymath}
    |\Omega_i^\varepsilon| < |\Omega_i^\alpha| + \varepsilon
\end{displaymath}
for all $0<\alpha<\alpha_\varepsilon$ and all $i=1,\ldots,k$, which is possible due to the convergence of the $\Omega_i^\alpha$ to $\Omega_i^0$.

Now by choice of $\Omega_i^\varepsilon$, Theorem~\ref{thm:continuity-monotonicity} implies $\robinlambda[\alpha] (\Omega_i^\alpha) \geq \robinlambda[\alpha] (\Omega_i^\varepsilon)$. It follows that, if $f_i^\varepsilon \in \mathcal{O}(\alpha)$ is the remainder function giving equality in \eqref{eq:robin-at-0} for $\Omega_i^\varepsilon$, then
\begin{displaymath}
    \frac{\robinlambda[\alpha] (\Omega_i^\alpha)}{\alpha} \geq \frac{\robinlambda[\alpha] (\Omega_i^\varepsilon)}{\alpha} \geq \frac{|\partial\Omega_i^\alpha|}{|\Omega_i^\alpha|-\varepsilon} + f_i^\varepsilon (\alpha)
\end{displaymath}
for all $0 < \alpha < \alpha_\varepsilon$. Since $f_i^\varepsilon (\alpha) \to 0$ as $\alpha \to 0$ (and $f_i^\varepsilon$ is a fixed function depending only on $i$ and our choice of $\alpha_\varepsilon$), we may find some $\tilde\alpha_\varepsilon \leq \alpha_\varepsilon$ such that $|f_i^\varepsilon (\alpha)| < \varepsilon$ for all $0 < \alpha < \tilde\alpha_\varepsilon$ and all $i=1,\ldots,k$.

Summarising, for our fixed subsequence of $\alpha$, given any $\varepsilon>0$ we have found some $\tilde\alpha_\varepsilon>0$ such that for all $0<\alpha<\tilde\alpha_\varepsilon$ and all $i=1,\ldots,k$,
\begin{displaymath}
    \frac{\robinlambda[\alpha] (\Omega_i^\alpha)}{\alpha} \geq \frac{|\partial\Omega_i^\alpha|}{|\Omega_i^\alpha|-\varepsilon} - \varepsilon.
\end{displaymath}
Letting $\varepsilon \to 0$ (and $\alpha\to 0$) yields \eqref{eq:energyc-limit} and hence completes the proof of Theorem~\ref{thm:robin-limit}(1).

\subsection{Proof of Theorem~\ref{thm:robin-limit}(2)}

The proof follows the same three steps as the proof of Theorem~\ref{thm:robin-limit}(1), so we only sketch them briefly. Throughout, we will denote by
\begin{displaymath}
    \partitionk[k]^D := \{\Omega_1^D,\ldots,\Omega_k^D\}
\end{displaymath}
a minimal Dirichlet $k$-partition of $\Gamma$ and, for each $\alpha > 0$, by
\begin{displaymath}
    \partitionk[k]^\alpha := \{\Omega_1^\alpha,\ldots,\Omega_k^\alpha\},
\end{displaymath}
as we did in the proof of Theorem~\ref{thm:robin-limit}(1). We will assume that $\alpha \to \infty$ represents an arbitrary sequence of values diverging to $\infty$.

\emph{Step 1:} A simple ``test partition'' argument using the upper bound $\robinlambda[\alpha] (\Omega_i^D) \leq \dirichletlambda (\Omega_i^D)$ for all $i$ shows that
\begin{displaymath}
    \roptenergy[\alpha]k (\Gamma) \leq \doptenergy[k] (\Gamma)
\end{displaymath}
for each $\alpha > 0$, and so $\limsup_{\alpha \to \infty}\roptenergy[\alpha]k (\Gamma) \leq \doptenergy[k] (\Gamma)$ as well.

\emph{Step 2:} Up to a subsequence, $\partitionk[k]^\alpha$ converges to a $k$-partition $\partitionk[k]^\infty = (\Omega_1^\infty,\ldots,\Omega_k^\infty)$ in the sense of Definition~\ref{def:partition-convergence}. Again we have convergence to an $m$-partition, $m \leq k$, and merely have to rule out the vanishing case; but $|\Omega_i^\alpha| \to 0$ for some $i$ is immediately ruled out by Lemma~\ref{lem:robin-volume-divergence} together with the result of Step 1 and the minimality of $\partitionk[k]^\alpha$.

\emph{Step 3:} We sketch that $\partitionk[k]^\infty$ is a minimal $k$-partition for $\energyd$. As earlier, it is certainly sufficient to show that, up to a further subsequence in $\alpha$,
\begin{equation}
\label{eq:energyd-from-above}
    \energyd (\partitionk[k]^\infty) \leq \liminf_{\alpha \to \infty} \energyr[\alpha] (\partitionk[k]^\alpha).
\end{equation}
We take a subsequence such that, for each $i$, $\Omega_i^\alpha$ has the same topology for all $\alpha$, fix $\varepsilon>0$, choose $\alpha_\varepsilon>0$, and define the exterior approximation
\begin{displaymath}
    \Omega_i^\varepsilon := \overline{\bigcup_{\alpha>\alpha_\varepsilon} \Omega_i^\alpha}
\end{displaymath}
analogously to the proof of Theorem~\ref{thm:robin-limit}(2). We also define $g_i^\varepsilon (\alpha) := \robinlambda[\alpha](\Omega_i^\varepsilon) - \dirichletlambda(\Omega_i^\varepsilon)$, so that $g_i^\varepsilon$ is a continuous function of $\alpha$ which converges to $0$ as $\alpha \to \infty$ by Lemma~\ref{lem:robin-asymptotic-zero}. The local monotonicity of Theorem~\ref{thm:continuity-monotonicity} applied to $\Omega_i^\alpha$ and $\Omega_i^\varepsilon$ then yields
\begin{displaymath}
    \robinlambda[\alpha](\Omega_i^\alpha) \geq \robinlambda[\alpha](\Omega_i^\varepsilon) = \dirichletlambda(\Omega_i^\varepsilon) + g_i^\varepsilon (\alpha)
\end{displaymath}
for all $\alpha > \alpha_\varepsilon$. Note that the Dirichlet eigenvalues also satisfy the same monotonicity and continuity properties as their Robin counterparts (and this is easier to prove, cf.\ \cite[Lemma~3.6]{KeKuLeMu21}); hence we may choose $\alpha_\varepsilon$ large enough that
\begin{displaymath}
    |\dirichletlambda (\Omega_i^\varepsilon) - \dirichletlambda (\Omega_i^\infty)| < \tfrac{\varepsilon}{2};
\end{displaymath}
we also take it large enough that $|g_i^\varepsilon (\alpha)| < \tfrac{\varepsilon}{2}$ for all $\alpha > \alpha_\varepsilon$. It follows that
\begin{displaymath}
    \robinlambda[\alpha] (\Omega_i^\alpha) \geq \dirichletlambda (\Omega_i^\infty) - \varepsilon
\end{displaymath}
for all $\alpha > \alpha_\varepsilon$. This establishes \eqref{eq:energyd-from-above} and hence completes the proof.

\appendix
\section{On the perimeter of a metric subgraph}
\label{sec:appendix}

In \cite[eq.~(2.5)]{Ma22}, the \emph{perimeter} of a subset $\Omega \subset \Gamma$ of a compact metric graph is defined as
\begin{equation}
\label{eq:perimeter}
    \perim (\Omega) = \sup \left\{ \left|\int_\Omega f'(x)\dx\right| :
    f \in \widetilde H^1(\Gamma),\, \sum_{v \in \VertexSet (\Gamma)}f|_e(v) = 0 \, \forall v \in \VertexSet (\Gamma), \, \|f\|_{L^\infty (\Gamma)} = 1 \right\},
\end{equation}
where $\widetilde H^1(\Gamma)=\bigoplus_{e \in \EdgeSet (\Gamma)} H^1(e)$, that is, the functions $f$ should \emph{not} be assumed continuous at the vertices. In this appendix we will not distinguish between subsets $\Omega$ and subgraphs $\Omega$, by always assuming that the subgraph $\Omega$ has maximal connectivity, that is, each vertex $v \in \VertexSet (\Gamma) \cap \partial\Omega$ has exactly one descendant. In this case we also do not need to distinguish between $v$ and its unique descendant.

If $\Omega$ is connected (or consists of finitely many connected components) -- probably the only case of interest for the Cheeger problem -- then the perimeter can be characterised quite simply. As always, we assume without loss of generality that, for any such set $\Omega$, its (necessarily finite) boundary $\partial\Omega$ consists only of vertices; in particular, this means that every edge of $\Gamma$ is contained either in $\Omega$ or its complement $\Gamma \setminus \Omega$ (apart possibly from the endpoints of the edge).

We recall the following notion from Definition~\ref{def:boundary}, which under our current assumptions becomes slightly simpler, and refer to the examples and discussion in Section~\ref{sec:example}.

\begin{definition}
Given a set $\Omega \subset \Gamma$ and a vertex $v \in \VertexSet (\Gamma)$,
\begin{enumerate}
\item[{\bf (a)}] we define its \emph{degree in $\Omega$}, $\deg_\Omega (v) \in [0,\deg_\Gamma (v)]$, to be the number of edges incident with $v$ which lie in $\Omega$ in a neighbourhood of $v$;
\item[{\bf (b)}] if $v \in \partial\Omega$, then we define its \emph{effective degree in $\Omega$} to be
\begin{equation}
\label{eq:eff-deg}
    \EffDeg_\Omega (v) := \min \{ \deg_\Omega (v),\, \deg_\Gamma (v) - \deg_\Omega (v) \}.
\end{equation}
\end{enumerate}
\end{definition}

\begin{proposition}
\label{prop:perimeter-characterisation}
Let $\Omega \subset \Gamma$ be a connected subset of a compact graph $\Gamma$. Then its perimeter as defined by \eqref{eq:perimeter} satisfies
\begin{equation}
\label{eq:perimeter-characterisation}
    \perim (\Omega) = |\partial\Omega|
    = \sum_{v \in \partial\Omega} \EffDeg_\Omega (v)
    = \sum_{v \in \partial\Omega} \min \{ \deg_\Omega (v),\, \deg_\Gamma (v) - \deg_\Omega (v) \}.
\end{equation}
\end{proposition}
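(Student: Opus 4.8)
The plan is to establish the chain of equalities in \eqref{eq:perimeter-characterisation} by focusing on the one nontrivial identity $\perim(\Omega) = \sum_{v \in \partial\Omega} \EffDeg_\Omega(v)$, since the remaining equalities are just the definition of $|\partial\Omega|$ (Definition~\ref{def:boundary}(c)) and the definition of effective degree \eqref{eq:eff-deg}. The key structural observation is that the supremum in \eqref{eq:perimeter} decouples across edges: since the competitor functions $f \in \widetilde H^1(\Gamma)$ are \emph{not} required to be continuous at vertices, the only coupling between edges comes through the constraint $\sum_{e \sim v} f|_e(v) = 0$ at each vertex and the global bound $\|f\|_{L^\infty(\Gamma)} = 1$. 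On each edge $e$, identified with an interval, we have $\int_e f' = f|_e(\text{right endpoint}) - f|_e(\text{left endpoint})$, so $\int_\Omega f'$ depends only on the vertex values $f|_e(v)$ taken by $f$ at the endpoints of edges lying in $\Omega$. The first step is therefore to rewrite $\int_\Omega f'$ as a sum over boundary vertices of net flux terms, observing that edges entirely interior to $\Omega$ (both endpoints in the interior) contribute nothing to the boundary count, while for each $v \in \partial\Omega$ the contribution is governed by the endpoint values on the $\deg_\Omega(v)$ edges of $\Omega$ incident to $v$.

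Next I would compute the maximal value the functional can attain at a single boundary vertex $v$, which is where the effective degree enters. Writing $d = \deg_\Omega(v)$ and $D = \deg_\Gamma(v)$, there are $d$ edges of $\Omega$ meeting $v$ and $D - d$ edges of the complement meeting $v$; the constraint at $v$ forces the $D$ endpoint values (one per incident edge, none of them tied together) to sum to zero. With all values bounded by $1$ in absolute value, the orientation conventions mean each edge of $\Omega$ at $v$ can contribute $\pm f|_e(v)$ to $\int_\Omega f'$, and to maximise the absolute contribution one sets the $\Omega$-endpoints to $+1$ and the complement-endpoints to $-1$ (or vice versa). The zero-sum constraint then caps the achievable value: one cannot make all $d$ of the $\Omega$-values equal to $1$ unless the $D-d$ complement-values can absorb the deficit within $[-1,1]$, which forces the optimum to be exactly $\min\{d, D-d\} = \EffDeg_\Omega(v)$. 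This is the heart of the matter, and I expect the precise bookkeeping of signs and of how the single global constraint $\|f\|_{L^\infty} = 1$ interacts with the per-vertex zero-sum constraints to be the main obstacle — in particular, verifying that the per-vertex optima can be attained \emph{simultaneously} by a single admissible $f$, so that the supremum is genuinely additive over $v \in \partial\Omega$ rather than merely bounded by the sum.

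To handle that simultaneity, I would construct an explicit maximiser: on each edge choose $f$ to be affine (or piecewise affine), interpolating between the prescribed endpoint values, which automatically lies in $\widetilde H^1$ and respects $\|f\|_{L^\infty} = 1$ provided the endpoint values are in $[-1,1]$. Because the vertex constraints couple only the values at a single vertex and edges can be oriented independently, one can assign endpoint values vertex-by-vertex without conflict: at an interior vertex set all incident values to $0$ (trivially satisfying the constraint and contributing nothing), and at each $v \in \partial\Omega$ use the extremal assignment achieving $\EffDeg_\Omega(v)$. The only point needing care is consistency when an edge joins two boundary vertices, but since $f|_e$ at the two endpoints of a single edge are independent degrees of freedom (no continuity across the edge is imposed at the vertices in $\widetilde H^1$, and the affine interpolant accommodates any two endpoint values in $[-1,1]$), no conflict arises. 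This yields a lower bound $\perim(\Omega) \geq \sum_v \EffDeg_\Omega(v)$ matching the upper bound, completing the proof. I would close by noting that the connectedness hypothesis on $\Omega$ is used only to ensure the boundary description is clean and finite, and that the argument is essentially a finite-dimensional linear-programming computation once the edgewise decoupling is made explicit.
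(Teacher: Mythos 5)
Your proposal is correct and follows essentially the same route as the paper's proof: both reduce $\int_\Omega f'$ to a sum of per-vertex contributions of edges of $\Omega$ via the vertex zero-sum constraint, bound each contribution by $\min\{\deg_\Omega(v),\,\deg_\Gamma(v)-\deg_\Omega(v)\}$ using $\|f\|_{L^\infty(\Gamma)}=1$, and then exhibit an explicit piecewise affine competitor attaining the bound. The only cosmetic differences are that the paper invokes Maz\'on's Green's formula for the reduction where you compute edge-by-edge and regroup by vertex, and its extremal function is supported in small balls around the boundary vertices rather than defined globally by affine interpolation.
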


We stress that under our assumptions the sum in \eqref{eq:perimeter-characterisation} is finite; in fact, the reasoning in the proof shows that a set (at least, the closure of an open set) can have finite perimeter \emph{only if} it has a finite number of connected components.

\begin{proof}
Let $f \in \widetilde H^1(\Gamma)$ satisfy the Kirchhoff condition $\sum_{v \in \VertexSet (\Gamma)}f|_e(v) = 0$ $\forall v \in \VertexSet (\Gamma)$. Applying Green's formula \cite[eq.\ (2,4)]{Ma22} to $f$ and $\chi_\Omega \in \BV (\Gamma)$ yields
\begin{displaymath}
    \int_\Omega f'(x)\,\dx = \int_\Gamma \chi_\Omega f'
    = \sum_{v \in \VertexSet (\Gamma)} \sum_{e \sim v} f|_e (v)\chi_\Omega|_e (v)
    - \int_\Gamma fD\chi_\Omega.
\end{displaymath}
Now the latter integral is $0$ since $\chi_\Omega$ is constant on each edge (thanks to our assumption that its boundary consists only of vertices). Using the Kirchhoff condition satisfied by $f$ on the sum (which ensures that $\sum_{e \sim v} f|_e(v) \chi_\Omega|_e (v)$ is zero at any vertex $v$ \emph{not} in $\partial \Omega$), this identity reduces to
\begin{displaymath}
    \int_\Omega f'(x)\,\dx = \sum_{v \in \partial\Omega} \sum_{\substack{e \sim v\\ e \subset \Omega}} f|_e (v).
\end{displaymath}
Since all sums involved are finite, this means we may write the perimeter as
\begin{multline*}
    \perim (\Omega) = \sum_{v \in \partial\Omega} \sup \Bigg\{\Big| \sum_{\substack{e \sim v\\ e \subset \Omega}} f|_e(v)\Big|:\\ f \in \widetilde H^1(\Gamma),\, \sum_{v \in \VertexSet (\Gamma)}f|_e(v) = 0 \, \forall v \in \VertexSet (\Gamma), \, \|f\|_{L^\infty (\Gamma)} = 1 \Bigg\}.
\end{multline*}
Now for each $v \in \partial\Omega$, the Kirchhoff condition on $f$ implies that
\begin{displaymath}
    \sum_{\substack{e \sim v\\ e \subset \Omega}} f|_e(v) = -\sum_{\substack{e \sim v\\ e \subset \Gamma\setminus\Omega}} f|_e(v);
\end{displaymath}
under the condition that $\|f\|_{L^\infty (\Gamma)}=1$ it follows that
\begin{displaymath}
    \Big| \sum_{\substack{e \sim v\\ e \subset \Omega}} f|_e(v)\Big|
    \leq \min \Bigg\{\sum_{\substack{e \sim v\\ e \subset \Omega}} 1,\, \sum_{\substack{e \sim v\\ e \subset \Gamma \setminus\Omega}} 1\Bigg\}
    = \min \left\{\deg_\Omega (v), \deg_\Gamma (v) - \deg_\Omega (v) \right\}.
\end{displaymath}
This proves the inequality ``$\leq$'' in \eqref{eq:perimeter-characterisation}. For the other direction, since the number of vertices is finite it clearly suffices to prove that, for a given vertex $v \in \partial\Omega$ and $\varepsilon>0$ small enough that $B_\varepsilon(v) \subset \Gamma$ contains no other vertices of $\Gamma$, there exists some $f \in \widetilde H^1(\Gamma)$ with $\supp f \subset \overline{B_\varepsilon (v)}$, $\sum_{e \sim v} f|_e (v) = 0$, $\|f\|_{L^\infty(\Gamma)} = 1$, and
\begin{displaymath}
    \sum_{\substack{e\sim v\\ e \subset \Omega}} f|_e (v) \geq \EffDeg_\Omega (v).
\end{displaymath}
But this is trivial: by assumption there exist at least $\EffDeg_\Omega(v)$ edges in $B_\varepsilon (v) \cap \Omega$ and $\EffDeg_\Omega (v)$ edges in $B_\varepsilon (v) \cap (\Gamma\setminus\Omega)$. On each of the former we construct $f|_e$ to be linear, rising from $0$ at $\partial B_\varepsilon (v)$ to $1$ at $v$; on each of the latter we take $f|_e$ to fall from $0$ at $\partial B_\varepsilon (v)$ to $-1$ at $v$. On any other edges in $B_\varepsilon (v)$ (which is the case if and only if $\min \left\{\deg_\Omega (v), \deg_\Gamma (v) - \deg_\Omega (v) \right\} < \max \left\{\deg_\Omega (v), \deg_\Gamma (v) - \deg_\Omega (v) \right\}$), $f$ is taken to be identically zero on them. This $f$ has all the desired properties, establishing equality in \eqref{eq:perimeter-characterisation}.
\end{proof}

\section{Alternative definitions of the Cheeger constant and the size of the boundary}
\label{sec:cheeger-alt}

In this appendix we discuss a possible alternative definition of Cheeger partitions, which is used on domains in \cite{BoPa18}, and which is thus worth mentioning, and also a possible alternative definition of the size of the boundary.

\subsection{On defining Cheeger partitions}
\label{app:cheeger-alt}

Following \cite{BoPa18}, for our alternative definition we first define the $1$-Cheeger constant of a \emph{set} $\Omega \subset \Gamma$ as 
\begin{equation}
\label{eq:cheeger1}
	h_1 (\Omega) := \inf \left\{ \frac{|\partial E|}{|E|} : E \subset \Omega \right\},
\end{equation}
with $\partial E$ being the topological boundary of $E$ as a subset of $\Gamma$ (and where without loss of generality $E$ is assumed to have finite perimeter), see also \cite[Section~3]{Ma22} where this definition is made for graphs. If $\Omega = \Gamma$, then this coincides with our Definition~\ref{def:cheeger}(b) when $k=1$, since our partitions are not required to be exhaustive. In particular, in general the infimum in \eqref{eq:cheeger1} will give a value smaller than the isoperimetric ratio $\frac{|\partial\Omega|}{|\Omega|}$ of $\Omega$ (indeed, it may be strictly smaller; see \cite[Example 3.1]{Ma22}). One then considers the minimisation problem
\begin{equation}
\label{eq:cheeger-variant}
    \inf \{ \max \{h_1(E_1),\ldots,h_1(E_k)\}:
    (E_1,\ldots,E_k) \text{ is a $k$-partition of $\Gamma$} \} > 0,
\end{equation}
that is, instead of minimising the maximum of the isoperimetric ratios of the $\Omega_i$ one minimises the maximum of the respective isoperimetric ratios of the ``best possible subsets'' of them.

Within the class of \emph{non-exhaustive partitions}, as already noted in \cite{BoPa18} in the case of domains, these two minimisation problems, \eqref{eq:cheegerk} and \eqref{eq:cheeger-variant}, are equivalent. To see this, first note that $\cheeger[k](\Gamma)$ is clearly no smaller than the infimum in \eqref{eq:cheeger-variant}, since $\cheeger[1](\Omega) \leq \frac{|\partial\Omega|}{|\Omega|}$ for all $\Omega \subset \Gamma$. For the other direction, given a $k$-partition $\partition = (\Omega_1,\ldots,\Omega_k)$, we may without loss of generality replace each $\Omega_i$ with the subgraph $E_i \subset \Omega_i$ realising
\begin{displaymath}
    \frac{|\partial E_i|}{|E_i|} = \inf_{E \subset \Omega_i}
    \frac{|\partial E|}{|E|} = h_1 (\Omega_i);
\end{displaymath}
the existence of such a set is an easy adaptation of the proof of Theorem~\ref{thm:cheeger} or of the theory in \cite[Section~5]{Ma22}. This results in a new partition $\widetilde\partition = (E_1,\ldots,E_k)$ such that
\begin{displaymath}
    \max_{i=1,\ldots,k} \cheeger[1] (\Omega_i) = \max_{i=1,\ldots,k} \frac{|\partial E_i|}{|E_i|} \geq \cheeger[k](\Gamma).
\end{displaymath}
This proves equality between \eqref{eq:cheegerk} and \eqref{eq:cheeger-variant}.

Recall (cf.\ \cite[Section 3]{Ma22}) that a set $\Omega$ is called \emph{calibrable} if $h_1(\Omega) = \frac{|\partial\Omega|}{|\Omega|}$: the above argument confirms that in the case of non-exhaustive partitions, any minimising partition may be assumed to consist of calibrable subgraphs of $\Gamma$, and thus the two problems are equivalent.

If, however, we assume that our partitions are exhaustive, then one can show that Theorem~\ref{thm:cheeger} continues to hold, with the same proof (since \cite[Theorem~3.13]{KeKuLeMu21} covers the case of exhaustive partitions); however, \eqref{eq:cheegerk} and \eqref{eq:cheeger-variant} will not be equivalent in general. We will not go into details.

\subsection{On the size of the boundary}
\label{app:alternative-boundary}

Here we make a few remarks about a possible alternative definition of the size of the boundary of a subgraph $\Omega$ of $\Gamma$ which we could equally use: we could simply define
\begin{equation}
\label{eq:alternative-boundary}
    |\partial\Omega| := \# \VertexSet(\Gamma) \cap \partial\Omega
\end{equation}
to be the number of boundary points of $\Omega$ as a subset of $\Gamma$, where each point simply counts one towards the total regardless of its degree; this boundary size is clearly smaller than ours. By way of example, in Figure~\ref{fig:mutant-3-cut} we would now have $\frac{|\partial\Omega_3|}{|\Omega_3|} = \frac{1}{2}$, meaning there is a two-sided discontinuity: $\frac{|\partial\Omega_3|}{|\Omega_3|} < \lim_{n\to\infty} \frac{|\partial\Omega_{3,n}|}{|\Omega_{3,n}|}$ \emph{and} $\frac{|\partial\Omega_3|}{|\Omega_3|} < \lim_{n\to\infty} \frac{|\partial\widetilde \Omega_{3,n}|}{|\widetilde \Omega_{3,n}|}$, whereas with the old definition we are guaranteed to have equality in one of the two case.

However, with this alternative, simpler definition, we would still obtain Theorem~\ref{thm:cheeger}, since $\energyc$ would still be lower semicontinuous in general (as the above example illustrates, and as can be shown via an easy modification of the proof of Lemma~\ref{lem:subgraph-convergence}, which actually becomes simpler). This would lead to a different notion of which graphs are highly connected. For example, for the \emph{pumpkin chain} $\Gamma$ of Figure~\ref{fig:simple-pumpkin-chain} (assuming all edges to have length one), in either case the $2$-Cheeger cut of $\Gamma$ will split apart the two pumpkins at $v$. But with our definition of $|\partial\Omega|$, the effective degree of $v$ in each pumpkin will be $4$ and the Cheeger constant is
\begin{displaymath}
    h_2(\Gamma) = \max \left\{\tfrac{4}{5},\tfrac{4}{4}\right\} = 1;
\end{displaymath}
while with the alternative definition \eqref{eq:alternative-boundary} of $|\partial\Omega|$ we would have the much smaller value $h_2(\Gamma) = \max \{\frac{1}{5},\frac{1}{4}\} = \frac{1}{4}$. (Note, in either case, that the partition energy $\energyc$ will have a jump discontinuity if we shift the location of the cut into the $5$-pumpkin: it is lower semicontinuous but not continuous at $v$. While for this graph the two definitions result in the same Cheeger cut, by playing with the edge lengths it should be possible to generate an example where the optimal cut depends on our definition of $|\partial\Omega|$.)
\begin{figure}[ht]
\begin{tikzpicture}[scale=1.2]
\draw[thick,bend left=90]  (2,0) edge (4,0);
\draw[thick,bend right=90]  (2,0) edge (4,0);
\draw[thick,bend left=25] (2,0) edge (4,0);
\draw[thick, bend right=25] (2,0) edge (4,0);
\draw[fill] (2,0) circle (1.5pt);
\draw[fill] (4,0) circle (1.5pt);
\draw[fill] (0,0) circle (1.5pt);
\draw[thick] (0,0) -- (2,0);
\draw[thick,bend left=90] (0,0) edge (2,0);
\draw[thick,bend right=90] (0,0) edge (2,0);
\draw[thick,bend left=35] (0,0) edge (2,0);
\draw[thick,bend right=35] (0,0) edge (2,0);
\node at (2,0.25) [anchor=south] {$v$};
\end{tikzpicture}
\caption{A pumpkin chain $\Gamma$ consisting of a $5$-pumpkin and a $4$-pumpkin glued at the vertex $v$. All edges are assumed to have length~$1$.}
\label{fig:simple-pumpkin-chain}
\end{figure}
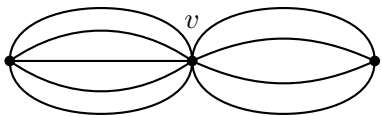

Both definitions (namely the smaller value \eqref{eq:alternative-boundary} and the larger value of Definition~\ref{def:boundary}(c)) have advantages and disadvantages: we would argue that there is no one canonical definition of $|\partial\Omega|$. For the graph of Figure~\ref{fig:simple-pumpkin-chain} we need only a rank-one cut (cf.\ \cite[Definition~2.2]{HoKe21}) in order to disconnect $\Gamma$ at $v$, which is consistent with assigning the vertex a ``size'' of $1$. However, the larger value coming from ``effective degree'' has several other advantages, which is why we use it: first, it corresponds to the notion of perimeter, as discussed in Appendix~\ref{sec:appendix}; second, in terms of the Laplacian, $\Gamma$ behaves like a highly connected graph (for example, it satisfies the bound of Band--L\'evy for doubly-connected graphs \cite[Theorem~2.1]{BaLe17}); and third, all our proofs may be trivially generalised if one replaces Definition~\ref{def:boundary}(c)) with the smaller value \eqref{eq:alternative-boundary} (while in the other direction we expect this would be less obvious).

\bibliographystyle{plain}

\end{document}